
\documentclass[reqno]{amsart}

\usepackage{amssymb}  
\usepackage{enumitem}
\usepackage{tikz}
\usepackage{hyperref}
\usepackage{bbm}

\newcommand*{\mailto}[1]{\href{mailto:#1}{\nolinkurl{#1}}}

\newcommand{\msc}[1]{\href{http://www.ams.org/msc/msc2010.html?t=&s=#1}{#1}}

\newtheorem{theorem}{Theorem}[section]
\newtheorem{definition}[theorem]{Definition}
\newtheorem{lemma}[theorem]{Lemma}

\newtheorem{corollary}[theorem]{Corollary}

\newtheorem{remark}[theorem]{Remark}


\newcommand{\R}{{\mathbb R}}
\newcommand{\N}{{\mathbb N}}
\newcommand{\Z}{{\mathbb Z}}
\newcommand{\C}{{\mathbb C}}

\newcommand{\bD}{{\mathbf D}}

\newcommand{\E}{\mathrm{e}}
\newcommand{\I}{\mathrm{i}}
\newcommand{\rE}{\mathrm{E}}

\newcommand{\loc}{\mathrm{loc}}

\newcommand{\sign}{\mathrm{sgn}}
\newcommand{\dom}{{\rm dom}}
\newcommand{\ran}{{\rm ran}}
\newcommand{\ac}{\mathrm{ac}}
\newcommand{\ess}{\mathrm{ess}}
\newcommand{\sing}{\mathrm{s}}

\newcommand{\be}{\begin{equation}}
\newcommand{\ee}{\end{equation}}

\newcommand{\wh}{\widehat}

\newcommand{\cF}{\mathcal{F}}

\newcommand{\cK}{\mathcal{K}}

\newcommand{\eps}{\varepsilon}

\newcommand{\cL}{\mathcal{L}}
\newcommand{\cH}{\mathcal{H}}
\newcommand{\cJ}{\mathcal{J}}
\newcommand{\cA}{\mathcal{A}}

\newcommand{\gH}{\mathfrak{H}}

\newcommand{\gq}{\mathfrak{q}}
\newcommand{\gt}{\mathfrak{t}}


\numberwithin{equation}{section}


\begin{document}

\title[Hamiltonian--Krein Index]{On the Hamiltonian--Krein index\\ for a non-self-adjoint spectral problem}

\author[A. Kostenko]{Aleksey Kostenko}
\address{Faculty of Mathematics and Physics\\ University of Ljubljana\\ Jadranska ul.\ 19\\ 1000 Ljubljana\\ Slovenia\\ and Faculty of Mathematics\\ University of Vienna\\ 
Oskar-Morgenstern-Platz 1\\ 1090 Wien\\ Austria\\ and RUDN University\\ Miklukho-Maklaya Str. 6\\ 
117198 Moscow\\Russia}
\email{\mailto{Aleksey.Kostenko@fmf.uni-lj.si};\ \mailto{Oleksiy.Kostenko@univie.ac.at}}
\urladdr{\url{http://www.mat.univie.ac.at/~kostenko/}}

\author[N. Nicolussi]{Noema Nicolussi}
\address{Faculty of Mathematics\\ University of Vienna\\
Oskar-Morgenstern-Platz 1\\ 1090 Wien\\ Austria}
\email{\mailto{nicolussin29@univie.ac.at}}

\thanks{{\it Research supported by the Austrian Science Fund (FWF) 
under Grant No.\ P28807 (A.K. and N.N.) and by the ``RUDN University Program 5-100" (A.K.).}}

\thanks{{\it Proc.\ Amer.\ Math.\ Soc., to appear}}

\keywords{Schr\"odinger equation, Krein space, Hamiltonian--Krein instability index}
\subjclass[2010]{Primary \msc{35P15}; Secondary \msc{47A53}; \msc{47A75}}

\begin{abstract}
We investigate the instability index of the spectral problem 
\[
-c^2y'' + b^2y + V(x)y = -\mathrm{i} z y'
\]
 on the line $\mathbb{R}$, where $V\in L^1_{\rm loc}(\mathbb{R})$ is real valued and $b,c>0$ are constants. This problem arises in the study of stability of solitons for certain nonlinear equations (e.g., the short pulse equation and the generalized Bullough--Dodd equation). We show how to apply the standard approach in the situation under consideration and as a result we provide a formula for the instability index in terms of certain spectral characteristics of the 1-D Schr\"odinger operator $H_V=-c^2\frac{d^2}{dx^2}+b^2 +V(x)$.
\end{abstract}

\maketitle

\section{Introduction}

Spectral problems of the form
\be\label{eq:1.01}
L u = z\, J u,\qquad z\in\C,
\ee
where $L=L^\ast$ is a lower semibounded self-adjoint operator and  $J=J^\ast=J^{-1}$ is a self-adjoint unitary operator in a Hilbert space $\gH$ naturally appear in the study of various important nonlinear equations (see, e.g.,  \cite{kp13}). It is well known that in the case if $L$ is a nonnegative operator, then in general the spectrum of \eqref{eq:1.01} is real  (for example, this holds if $0\in \rho(L)$). However, if $L$ has nonempty negative spectrum, then it turns out that \eqref{eq:1.01} might have nonreal eigenvalues and also real eigenvalues with Jordan chains of lengths more than $2$. The number of those eigenvalues is usually referred to as {\em the instability index} $\kappa_{\rm Ham}$ (a precise definition will be given below). The instability index plays a crucial role in the study of spectral and orbital stability of nonlinear waves and it turns out that it can be computed in terms of certain spectral characteristics of $L$. More precisely, avoiding some technical assumptions on the operators $L$ and $J$, this formula reads  
\be\label{eq:instindex}
\kappa_{\rm Ham} = \kappa_-(L) - \kappa_-(\bD),
\ee
where $\kappa_-(L)$ is the total multiplicity of the negative spectrum of $L$, $\kappa_-(L):=\dim P_{(-\infty,0)}(L)$, where $P_{\Omega}(L)$ is the spectral projection, and $\kappa_-(\bD)$ is the number of negative eigenvalues of {\em the constrained matrix} $\bD=(\bD_{k,n})$ usually defined by 
\be\label{eq:constrained}
\bD_{k,n} := \big(L^{-1}J\psi_k,J\psi_n\big)_{\gH},\qquad k,n=1,\dots,N,
\ee
and $\{\psi_k\}_{k=1}^N$ is an orthonormal basis in $\ker(L)$.  
These results were originally obtained by L.\ S.\ Pontryagin \cite{pon} and M.\ G.\ Krein \cite{kre1,kre2} (see also \cite{iokr}) in the 1940--1950s\footnote{Seems, in the finite dimensional case this goes back to the work of G.\ Frobenius \cite{fro}.} and then rediscovered later in connection with the study of stability problems for nonlinear waves (see \cite{cpv, gri, kks1,kks2,kp13, mac}, where further details and references can be found). 
In this theory it is essential that the operator $J$ in \eqref{eq:1.01} is bounded. However, certain nonlinear equations lead to spectral problems of the form \eqref{eq:1.01} with an unbounded operator $J$. For example (see Sections 4 and 5 in \cite{ss} for details),  the short pulse equation \cite{os} and the generalized Bullough--Dodd equation \cite{bd} lead to the spectral problem
\be\label{eq:indSpec}
- c^2y'' + b^2y + V(x) y =  - \I\, z\, y',\quad x\in\R,
\ee
where $c$, $b>0$ are fixed positive constants and  $V \in L_{\loc}^1(\R)$ is a real-valued function. 
This spectral problem has the form \eqref{eq:1.01}, however, instead of the operator $J$ on the right-hand side we have  
 $D \equiv  - \I d/dx$, which generates an unbounded operator on $L^2(\R)$.  Let $H_V$ be the maximal operator associated in $L^2(\R)$ with 
\begin{equation}\label{eq:l_V}
\tau_V = -c^2\, \frac{d^2}{dx^2} + b^2 + V(x).
\end{equation}
If $V\in L^1_{\loc}(\R)$ is real-valued and satisfies 
\begin{equation} \label{eq:compact}
\lim_{|x| \to \infty} \int_x^{x+1} |V(s)| \; ds = 0,
\end{equation}
then (cf. \cite{birman}) the operator $H_V$ is self-adjoint, bounded from below and its essential spectrum is $[b^2,\infty)$. M.\ Stanislavova and A.\ Stefanov \cite{ss} addressed the question whether \eqref{eq:indSpec} is {\em spectrally stable} in the sense of the following definition.

\begin{definition} \label{def:specInst}
A complex number $z\in\C$ is called {\em an eigenvalue} of \eqref{eq:indSpec} if there is $\psi_z\in \dom(H_V)$ called an eigenfunction, such that $\psi_z\neq 0$ and $H_V\psi_z = -\I z\psi_z'$. 
An eigenvalue $z\neq 0$ is called {\em unstable} if either $z\neq z^\ast$ or $z\in\R\setminus\{0\}$ and $(H_V\psi_z,\psi_z)_{L^2}\le 0$ for some eigenfunction $\psi_z\neq 0$. 
 
The spectral problem \eqref{eq:indSpec} is called {\em spectrally stable} if there are no unstable eigenvalues. Otherwise, it is called {\em spectrally unstable}.
\end{definition}

We use the asterisk to denote complex conjugation. It turns out that spectral instability is equivalent to the fact that the Hamiltonian--Krein index $\kappa_{\rm Ham}$ is positive. 
 In \cite{ss}, spectral stability of \eqref{eq:indSpec} was studied under the assumption that $H_V$ has exactly one negative eigenvalue (let us mention that in \cite{ss}, $H_V$ is not necessarily a 1-D Schr\"odinger operator, however, in applications to nonlinear equations it has exactly this form, see \cite{ss} for further details). Since the right-hand side in \eqref{eq:indSpec} gives rise to an unbounded operator in $L^2(\R)$, one needs to develop a new approach to investigate the instability index. In \cite{ss}, this was done by modifying the Evans function approach. Our main aim is to show how \eqref{eq:indSpec} can be reduced to the form \eqref{eq:1.01} with a bounded operator $J$ in order to then be able to apply the standard theory going back to the work of L.\ S.\ Pontryagin and M.\ G.\ Krein. Our approach has several advantages. First of all, it can be seen as a natural extension of the classical approach via the Krein space setting. Moreover, it enables us to compute the instability index in the case when $H_V$ has more than one negative eigenvalue. In particular, M.\ Stanislavova and A.\ Stefanov in \cite{ss} employed two different techniques for proving spectral stability resp. instability, whereas our approach covers both cases in a uniform manner. Moreover, we plan to develop it in a much wider setting in a forthcoming paper.  

Let us now formulate the results. Under the above assumptions on $V$, the kernel of the operator $H_V$ is at most one-dimensional since \eqref{eq:l_V} is limit point at $\infty$.  
If $\ker(H_V)={\rm span}\{\psi_0\}$ for some $\psi_0\neq 0$, we then set
\be\label{eq:bD_V}
\bD_V:= (H_V^{-1}\psi_0',\psi_0')_{L^2}.
\ee
We shall show in Section \ref{sec:III} that $\psi_0'\in \ran(H_V)$ and hence $\bD_V$ is defined correctly.

Our main result reads as follows.
 
\begin{theorem}\label{th:main}
Assume that $V\in L^1_{\loc}(\R)$ is real valued and satisfies \eqref{eq:compact}.
Let also $H_V$ be the maximal operator associated with \eqref{eq:l_V} in $L^2(\R)$ and $\kappa_-(H_V)\ge 1$ be the number of negative eigenvalues of $H_V$. 
 \begin{itemize} 
\item[{\rm (i)}] If  $\ker(H_V)=\{0\}$, then \eqref{eq:indSpec} is spectrally unstable. 
\item[{\rm (ii)}]  If $\ker(H_V)={\rm span}\{\psi_0\}\neq \{0\}$ and $\bD_V\neq 0$, then \eqref{eq:indSpec} is spectrally stable exactly when $\kappa_-(H_V)=1$ and $\bD_V<0$. 
\item[{\rm (iii)}] If either $\kappa_- (H_V)$ is odd and $\bD_V >0$ or  $\kappa_- (H_V)$ is even and $\bD_V <0$, then \eqref{eq:indSpec} has at least one purely imaginary eigenvalue $z \in \I \R_{>0}$. 
\end{itemize}
\end{theorem}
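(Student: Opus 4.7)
The strategy is to recast \eqref{eq:indSpec} as a spectral problem of the form \eqref{eq:1.01} with bounded self-adjoint $J$, so that the classical Pontryagin--Krein formula \eqref{eq:instindex} becomes directly available. Fix $m > 0$ large enough that $H_V + m$ is strictly positive, substitute $\psi = (H_V+m)^{-1/2} u$ into \eqref{eq:indSpec}, and multiply on the left by $(H_V+m)^{-1/2}$. Writing $D := -\I\, d/dx$, this yields the equivalent problem
\[
\wt L u = z\, \wt J u, \quad \wt L := H_V(H_V+m)^{-1}, \quad \wt J := (H_V+m)^{-1/2} D (H_V+m)^{-1/2}.
\]
Both $\wt L$ and $\wt J$ are bounded self-adjoint on $L^2(\R)$; for $\wt J$, this uses the form estimate $\|(H_V+m)^{1/2}\psi\|^2 \ge \eps(\|\psi'\|^2+\|\psi\|^2)$ valid for $m$ large, which makes $D(H_V+m)^{-1/2}$ bounded. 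By the functional calculus, the map $\lambda\mapsto\lambda/(\lambda+m)$ preserves sign and kernel, so $\ker(\wt L) = \ker(H_V)$ and $\kappa_-(\wt L) = \kappa_-(H_V)$.

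Next I would identify the constrained matrix of $(\wt L,\wt J)$ with $\bD_V$. When $\ker(H_V) = \mathrm{span}\{\psi_0\}$ with $\psi_0$ real-valued, $(H_V+m)^{-1/2}\psi_0 = m^{-1/2}\psi_0$, whence $\wt J \psi_0 = -\I m^{-1/2} (H_V+m)^{-1/2}\psi_0'$. The required orthogonality $\wt J\psi_0 \perp \ker \wt L$ reduces to $(\psi_0',\psi_0)_{L^2}=0$, true since $\psi_0$ is real and decaying. Using the generalized inverse $\wt L^{-1} = (H_V+m)H_V^{-1}$ on $(\ker H_V)^\perp$ and the self-adjointness of $(H_V+m)^{1/2}$, a short computation gives $\bD = (\wt L^{-1}\wt J\psi_0,\wt J\psi_0)_{L^2} = m^{-1}(H_V^{-1}\psi_0',\psi_0')_{L^2} = m^{-1}\bD_V$, so $\sgn\bD = \sgn\bD_V$ and $\kappa_-(\bD)$ equals $0$ if $\bD_V>0$ and $1$ if $\bD_V<0$. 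Formula \eqref{eq:instindex} now reads $\kappa_{\rm Ham} = \kappa_-(H_V) - \kappa_-(\bD)$, and by the discussion in the Introduction, spectral stability is equivalent to $\kappa_{\rm Ham} = 0$.

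Claims (i) and (ii) then follow by a direct case analysis. For (i), with $\ker(H_V) = \{0\}$ there is no constraint and $\kappa_{\rm Ham} = \kappa_-(H_V) \ge 1$, giving instability. For (ii), $\bD_V > 0$ yields $\kappa_{\rm Ham} = \kappa_-(H_V) \ge 1$ (unstable), whereas $\bD_V < 0$ yields $\kappa_{\rm Ham} = \kappa_-(H_V) - 1$, which vanishes iff $\kappa_-(H_V) = 1$. For (iii), both stated hypotheses force $\kappa_{\rm Ham}$ to be odd. Here I would invoke the finer Pontryagin--Krein decomposition counting spectral multiplicities in one half-plane, combined with the symmetry $z\mapsto -z^*$ built into \eqref{eq:indSpec} by complex conjugation and the Krein-space symmetry $z\mapsto z^*$ of the transformed problem. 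These generate a Klein four-group whose orbits are quadruples $\{\pm z,\pm z^*\}$ off the axes, real pairs $\{z,-z\}$, and purely imaginary pairs $\{\I\beta,-\I\beta\}$; each off-axis quadruple and each real pair contributes an even count to $\kappa_{\rm Ham}$, and only purely imaginary pairs can contribute an odd count. Hence $\kappa_{\rm Ham}$ odd forces at least one purely imaginary eigenvalue in $\I\R_{>0}$.

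The principal technical obstacle is the bookkeeping in the reduction: verifying the bijective correspondence between eigenvalues, Jordan chains and Krein signatures of \eqref{eq:indSpec} and those of the transformed bounded problem, and ensuring the Pontryagin--Krein machinery applies in the stated form despite the original $J=D$ being unbounded. A subsidiary prerequisite is the well-definedness of $\bD_V$, i.e.\ $\psi_0' \in \ran(H_V)$, which is handled separately in Section \ref{sec:III}.
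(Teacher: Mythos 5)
Your reduction does not reach the framework in which the Pontryagin--Krein formula \eqref{eq:instindex} is valid, and this is the central gap. The classical theory (and the Hamiltonian--Krein index theorem you invoke) requires the Gram operator to induce a genuine Krein-space structure, i.e.\ $J$ must be bounded \emph{and boundedly invertible} (in the paper's setting, $J=J^\ast=J^{-1}$). Your $\wt J=(H_V+m)^{-1/2}D(H_V+m)^{-1/2}$ is bounded and self-adjoint, but $0$ lies in its essential spectrum: already for $V\equiv 0$ it is the Fourier multiplier $\lambda/(c^2\lambda^2+b^2+m)$, which vanishes both as $\lambda\to 0$ and as $|\lambda|\to\infty$. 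Hence $\wt J^{-1}$ is unbounded, the form $(\wt J\,\cdot,\cdot)_{L^2}$ does not make $L^2(\R)$ a Krein (or Pontryagin) space, and the definitizability/invariant-subspace machinery behind $\kappa_{\rm Ham}=\kappa_-(L)-\kappa_-(\bD)$ does not apply to the pencil $\wt L-z\wt J$ as it stands. You have merely traded the unboundedness of the original $J=D$ for the unboundedness of $\wt J^{-1}$, which is the same obstruction in disguise; the step ``Formula \eqref{eq:instindex} now reads $\kappa_{\rm Ham}=\kappa_-(H_V)-\kappa_-(\bD)$'' is therefore unjustified. The natural repair --- renorming $L^2$ by $\||\wt J|^{1/2}u\|$ so that $\wt J$ becomes a unitary signature operator --- essentially reconstructs the paper's passage to $\dot H^{1/2}(\R)$, where the problem becomes $\cL f=z\cJ f$ with $\cJ=\I\cH$ unitary and self-adjoint, and where one must still prove (as the paper does in Lemma \ref{lem:relcomp_birman}, Lemma \ref{lem:defin}, Lemma \ref{lem:gev} and Corollaries \ref{cor:ker=ker}--\ref{cor:kappaA}) self-adjointness and finiteness of the negative index of the reformulated operator, definitizability, and the coincidence of eigenvalues, Jordan chains and Krein signatures with those of the original $L^2$ problem.

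Your algebra identifying the constrained quantity ($\bD=m^{-1}\bD_V$, same sign) and the case analysis for (i)--(ii), as well as the symmetry argument for (iii) (point spectrum symmetric under both $z\mapsto z^\ast$ and $z\mapsto -z^\ast$, so off-axis quadruples and real pairs contribute evenly and an odd $\kappa_{\rm Ham}$ forces an eigenvalue in $\I\R_{>0}$), are in substance the same as the paper's final steps and would be fine \emph{once} the index formula is available; but as written they rest on the unestablished applicability of the Krein-space theory to your transformed pencil.
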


 \begin{remark}
Notice that in the case $\kappa_-(H_V) = 1$,  we  recover the results of M.\ Stanislavova and A.\ Stefanov, see \cite[Theorems 1-2]{ss}. 

Let us also mention that we touch upon the case $\bD_V = 0$  in Corollary \ref{cor:3.10}(iii). On the other hand, the analysis of stability of solitons becomes much more subtle in this case and we only refer in this respect to, e.g., \cite[Chapter 7]{kp13}.  
\end{remark}

Let us finish the introduction by briefly describing the content of the paper. Our main idea is to replace the Hilbert space $L^2(\R)$ by another Hilbert space $\dot{H}^{\frac12}(\R)$, which is defined as the completion of $L^2(\R)$ with respect to the norm 
\[
\|u\|_{\dot{H}^{\frac12}}^2:= \int_{\R} |\lambda| |\hat{u}(\lambda)|^2d\lambda.
\]
Here $\hat{u}:=\cF u$ denotes the Fourier transform of $u$ normalized by 
\be\label{eq:cF}
(\mathcal{F}u)(\lambda) = \hat{u}(\lambda) = \frac{1}{\sqrt{2\pi}} \int_{\R} \E^{-\I \lambda x}u(x)\,dx,\quad u\in L^1(\R)\cap L^2(\R).
\ee
In Section \ref{sec:II}, using the form approach, we develop the spectral theory of the operator $\cL$ defined in $\dot{H}^{\frac12}(\R)$ by the expression 
\[
c^2|D| + b^2|D|^{-1} + |D|^{-1}V(x),
\]  
where  
\be\label{eq:|D|}
|D|^a\colon u \mapsto \mathcal{F}^{-1} (|\lambda|^a \hat{u}(\lambda)).
\ee
for every $a\in\R$. In Section \ref{sec:III}, we show that the spectral problem \eqref{eq:indSpec} if considered in $\dot{H}^{\frac12}(\R)$ has the following form 
\be\label{eq:spnew}
\cL f= z\, \cJ f,
\ee
where $\cJ=\I\cH$ and $\cH$ is the Hilbert transform, which is a skew-self-adjoint and unitary operator in $\dot{H}^{\frac12}(\R)$. Therefore, we can apply the standard formula \eqref{eq:instindex} to compute the instability index of \eqref{eq:spnew}. The final step in the proof of our main result is the proof of the fact that the point spectrum  (including algebraic and geometric multiplicities) of the new spectral problem \eqref{eq:spnew} coincides with the point spectrum of the original problem \eqref{eq:indSpec} considered in $L^2(\R)$. 

Finally, in Appendix \ref{app:01} we collect basic notions and facts on quadratic forms.

\section{The auxiliary self-adjoint spectral problem}\label{sec:II}

The main focus of this section is on the auxiliary spectral problem 
\be\label{eq:Spec}
\tau_V(u) = z|D|u,
\ee
where $\tau_V$ is the differential expression \eqref{eq:l_V} and $|D|$ is defined by \eqref{eq:|D|}. 
In contrast to \cite{ss}, we are going to consider \eqref{eq:indSpec} and \eqref{eq:Spec} in the Hilbert space $\dot{H}^{\frac12}(\R)$. More precisely, let $H^s(\R)$, $s\in\R$ be the standard scale of Sobolev spaces. In particular,
\be\label{eq:H12}
\begin{split}
 H^{1/2}(\R):=&\{u \in L^2(\R)\colon |D|^{1/2} u \in L^2(\R)\}\\
 =&\{u \in L^2(\R)\colon |\lambda|^{1/2} \hat{u}(\lambda) \in L^2(\R)\}.
 \end{split}
\ee
Denote by $\dot{H}^{\frac12}(\R)$ the closure of $H^{1/2}(\R)$ with respect to the norm
\begin{equation}\label{eq:dotH}
\|u\|_{\dot{H}^{\frac12}} := \||D|^{1/2}u\|_{L^2}=\| |\lambda|^{1/2} \hat u \|_{L^2}. 
\end{equation}
Notice that 
\begin{equation}
\|u\|_{\dot{H}^{\frac12}}^2 = (|D|u,u)_{L^2} \le \|u\|_{H^1} \|u\|_{L^2},
\end{equation}
whenever  $u\in H^1(\R)$. Clearly, $H^1(\R)$ is dense in $\dot{H}^{\frac12}(\R)$. Moreover, $\dot{H}^{\frac12}(\R)$ is isometrically isomorphic to $L^2(\R)$. We write $j$ for the particular isometric isomorphism obtained by continuously extending
\begin{equation}\label{eq:j0}
\begin{array}{cccc}	
	j_0\colon & H^{1/2}(\R) &  \to & L^2(\R),\\[1mm]
	                & u   & \mapsto  & |\lambda|^{1/2} \hat u (\lambda).
\end{array}	                
\end{equation}

\subsection{The unperturbed case: $V\equiv 0$}\label{sec:II.1}

Assume that $b$, $c>0$. Let $\cL_0$ be the operator associated in $\dot{H}^{\frac12}(\R)$ with the spectral problem
\be\label{eq:SpecUnp}
-c^2 y'' + b^2y = z |D|y.
\ee
More precisely, 
\be
\begin{array}{cccc}
\cL_0 \colon & \dom(\cL_0) & \to & \dot{H}^{\frac12}(\R), \\[1mm]
  & u & \mapsto & c^2 |D|u + b^2|D|^{-1}u,
\end{array}
\ee
where $\dom(\cL_0)$ is the maximal domain,
\be 
 \dom(\cL_0) = \{u\in \dot{H}^{\frac12}(\R)\colon \, |D|u,\ |D|^{-1}u\in \dot{H}^{\frac12}(\R)\}.
\ee
The spectral properties of $\cL_0$ can easily be described by using the Fourier transform.

\begin{lemma}\label{lem:2.01}
The operator $\cL_0$ is self-adjoint and its spectrum is purely absolutely continuous,
\[
\sigma(\cL_0) = \sigma_{\ac}(\cL_0) = [2bc,\infty),\quad \sigma_{\sing}(\cL_0) = \emptyset.
\]
\end{lemma}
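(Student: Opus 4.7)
The plan is to diagonalize $\cL_0$ via the Fourier transform. The natural tool is the isometric isomorphism $j\colon\dot H^{1/2}(\R)\to L^2(\R)$, $u\mapsto |\lambda|^{1/2}\hat u(\lambda)$, introduced in~(2.6). For any $a\in\R$, a direct computation on the Fourier side gives
\[
j\bigl(|D|^a u\bigr)(\lambda)=|\lambda|^{1/2}\,|\lambda|^a\hat u(\lambda)=|\lambda|^a (ju)(\lambda),
\]
so $j|D|^a j^{-1}$ is just multiplication by $|\lambda|^a$ on $L^2(\R)$. Consequently $\widetilde\cL_0:=j\cL_0 j^{-1}$ should be the maximal multiplication operator on $L^2(\R)$ with symbol
\[
m(\lambda):=c^2|\lambda|+\frac{b^2}{|\lambda|},\qquad \lambda\in\R\setminus\{0\}.
\]

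Once this identification is in place, all claims follow from standard spectral theory of multiplication operators. Self-adjointness is automatic since $m$ is real-valued and measurable and the maximal domain $\{f\in L^2(\R)\colon mf\in L^2(\R)\}$ is used. The spectrum equals the essential range of $m$; the AM--GM inequality
\[
m(\lambda)=c^2|\lambda|+\frac{b^2}{|\lambda|}\ge 2\sqrt{c^2 b^2}=2bc,
\]
with equality at $|\lambda|=b/c$, identifies this essential range as $[2bc,\infty)$. Absolute continuity is then a consequence of the fact that $m$ is real-analytic on each half-line and its derivative vanishes only at the two points $\pm b/c$: the pushforward of Lebesgue measure under $m$ is therefore absolutely continuous on $[2bc,\infty)$, which is equivalent to $\sigma_{\mathrm{sing}}(\widetilde\cL_0)=\emptyset$ and $\sigma_{\ac}(\widetilde\cL_0)=[2bc,\infty)$.

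The only point that requires a brief check is the correspondence of domains: by definition $u\in\dom(\cL_0)$ iff $|D|u,|D|^{-1}u\in\dot H^{1/2}(\R)$, which via $j$ translates to $|\lambda|f\in L^2(\R)$ \emph{and} $|\lambda|^{-1}f\in L^2(\R)$ for $f=ju$. This is a priori stronger than $mf\in L^2(\R)$, but since both summands of $m$ are nonnegative and dominated pointwise by $m$, we have $|c^2\lambda f(\lambda)|\le m(\lambda)|f(\lambda)|$ and likewise for the other summand, so the two conditions are equivalent. I do not foresee any genuine obstacle; the argument is essentially a Fourier-multiplier calculation, and the only mild technicality is this equivalence of domains.
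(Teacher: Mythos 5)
Your proposal is correct and follows essentially the same route as the paper: conjugating $\cL_0$ by the isometry $j$ to obtain the maximal multiplication operator with symbol $c^2|\lambda|+b^2|\lambda|^{-1}$ on $L^2(\R)$ and then invoking standard facts about multiplication operators. You merely spell out the details (the domain equivalence via the pointwise domination of each summand by $m$, the essential range computation via AM--GM, and the absence of singular spectrum from the finiteness of the critical set of $m$) that the paper dismisses as ``not difficult to show.''
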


\begin{proof}
Denote by $\wh{\mathcal{L}}_0$ the self-adjoint multiplication operator on $L^2(\R)$ given by

\begin{equation}
\begin{array}{cccc}
\wh{\mathcal{L}}_0 \colon &  \dom(\wh{\mathcal{L}}_0) & \to &  L^2(\R),\\
& u &\mapsto& \left(c^2 |\lambda| + b^2 |\lambda|^{-1}\right)u,
\end{array}
\end{equation}
on the maximal domain 
\[
\dom(\wh{\mathcal{L}}_0) = \{u \in L^2(\R)\colon \, (|\lambda| + |\lambda|^{-1})u \in L^2(\R)\}.
\]
It is not difficult to show that the operators $\cL_0$ and $\wh\cL_0$ are unitarily equivalent and 
\be\label{eq:domL0}
\mathcal{L}_0 = j^{-1} \wh{\mathcal{L}}_0 j.
\ee  
Now the claim follows from the spectral properties of $\wh\cL_0$.
\end{proof}

\begin{remark}
By  \eqref{eq:domL0}, $\dom(\mathcal{L}_0)= j^{-1}(\dom(\wh{\mathcal{L}}_0))$  and hence \eqref{eq:j0} implies 
\be
\dom({\cL}_0) = \{u\in\dot{H}^{\frac12}(\R)\colon |\lambda|^{3/2}\hat{u},\ |\lambda|^{-1/2}\hat{u}\in L^2(\R)\}.
\ee
Therefore, $\dom({\cL}_0)\subset L^2(\R)$ since 
\begin{align*}
\|u\|^2_{L^2}=\int_{\R}|\hat{u}(\lambda)|^2 d\lambda &\le \int_{|\lambda|<1}\frac{1}{|\lambda|}|\hat{u}(\lambda)|^2 d\lambda + \int_{|\lambda|>1}{|\lambda|^3}|\hat{u}(\lambda)|^2 d\lambda\\
&\le \||\lambda|^{-1/2}\hat{u}\|^2_{L^2} + \||\lambda|^{3/2}\hat{u}\|^2_{L^2} \\
&= \||D|^{-1} u\|^2_{\dot{H}^{\frac12}(\R)} + \||D| u\|^2_{\dot{H}^{\frac12}(\R)}.
\end{align*}
The latter also implies $\dom(\cL_0)\subseteq H^{3/2}(\R)$.
\end{remark}

Let us consider the following quadratic form  in $\dot{H}^{\frac12}(\R)$
\be\label{eq:formL0}
\gt_0[u] := c^2 \int_{\R}|u'(x)|^2\, dx + b^2\int_{\R}|u(x)|^2\, dx, 
\ee
defined on the maximal domain
\be\label{eq:domt0}
\dom(\gt_0):= \{u\in \dot{H}^{\frac12}(\R)\colon \gt_0[u]<\infty\}.
\ee
Since $H^1(\R)\subset \dot{H}^{\frac12}(\R)$, we get $\dom(\gt_0)=H^1(\R)$. Clearly, the form $\gt_0$ is closed in $\dot{H}^{\frac12}(\R)$ (this can be seen by applying the isometry $j$).  Moreover, 
\[
(\cL_0 u,u)_{\dot{H}^{\frac12}} = \gt_0[u]
\]
for all $u\in\dom(\cL_0)$ and hence $\cL_0$ is the self-adjoint operator associated with the form $\gt_0$ in $\dot{H}^{\frac12}(\R)$. This implies that $\dom(\cL_0)\subset H^1(\R)$ and $\dom(\sqrt{\cL_0}) = H^1(\R)$.

\subsection{The case $V\not\equiv 0$} \label{sec:II.2}

Let $V \in L_{\loc}^1(\R)$ be a real-valued function satisfying
\begin{equation} \label{eq:cond_int_bd}
		M_V := \sup_{n \in \Z} \; \int_{n}^{n+1} |V(x)| \; dx < \infty.
\end{equation}
Our main aim is to associate a self-adjoint operator $\cL$ acting in $\dot{H}^{\frac12}(\R)$ with the spectral problem \eqref{eq:Spec}. Our main tool in dealing with \eqref{eq:Spec} is the form approach.

Here and below we shall use the following notation
\be\label{eq:sqrtV}
V^{1/2}:= \frac{V}{|V|^{1/2}}.
\ee
It is well known (see, e.g., \cite{kato}) that under these conditions $V^{1/2} u \in L^2(\R)$ for every $u \in H^1(\R)$ and
\begin{equation} \label{eq:est_multiplicationbysqrt}
			\| V^{1/2} u \|_{L^2}^2 \leq M_V\big(\eps \|u'\|_{L^2}^2 + (1 + \eps^{-1}) \|u\|_{L^2}^2\big)
\end{equation}
where $\eps >0$ is arbitrary. Hence the quadratic form defined by
\begin{equation}\label{eq:gq_V}
			\gq_V[u] := (V^{1/2} u, |V|^{1/2} u )_{L^2} = \int_{\R} |u(x)|^2V(x)\,dx,
\end{equation}
for all $u \in H^1(\R)$ is form bounded with respect to the form $\gt_0$ given by \eqref{eq:formL0}. 

Writing the spectral problem \eqref{eq:Spec} in the form
\be\label{eq:Spec02}
(c^2 |D| + b^2 |D|^{-1} + |D|^{-1} V) u = z\, u,
\ee
we want to interpret it as a perturbation of \eqref{eq:SpecUnp} by the term $ |D|^{-1} V$. 
By formal computation (or at least for suitable $u$),
\begin{equation}
\gq_V[u]=(|D|^{-1}Vu, u)_{\dot{H}^{\frac12}}, 
\end{equation}
which hints that the form $\gq_V$ considered as a form in the Hilbert space $\dot{H}^{\frac12}(\R)$ should be used to represent this additional term. More precisely, let us consider in $\dot{H}^{\frac12}(\R)$ the following form
\be\label{eq:formL}
\gt_V[u]:= \gt_0[u] + \gq_V[u],\quad \dom(\gt_V):=\dom(\gt_0)=H^1(\R).
\ee
The form $\gq_V$ considered as a form in $L^2(\R)$ is infinitesimally form bounded with respect to the free Hamiltonian $H_0 := -\frac{d^2}{dx^2}$ if $V$ satisfies \eqref{eq:cond_int_bd} (see \eqref{eq:est_multiplicationbysqrt}) and hence by the KLMN Theorem \ref{th_KLMN}, the quadratic form $\gt_V$ considered as a form in $L^2(\R)$ is bounded from below and closed. However, it seems the form $\gt_V$ considered as a form in the Hilbert space $\dot{H}^{\frac12}(\R)$ may not be strongly $\gt_0$-bounded if $V$ satisfies \eqref{eq:cond_int_bd} (the $\dot{H}^{\frac12}$ norm does not control the $L^2$ norm at small energies). Therefore, in order to define the form $\gt_V$ in $\dot{H}^{\frac12}(\R)$ we need the following extra condition
\begin{equation} \label{eq:cond_int_limit_0}
\lim_{|x| \to \infty} \int_x^{x+1} |V(s)| \; ds = 0.
\end{equation}

\begin{lemma} \label{lem:relcomp_birman}
	Let $V \in L_\loc^1(\R)$ be a real-valued function satisfying \eqref{eq:cond_int_limit_0}. Then:
 \begin{itemize}
\item[{\rm (i)}]  The form $\gq_V$ is infinitesimally form bounded with respect to the form $\gt_0$.
\item[{\rm (ii)}]  The form $\gt_V$ is closed and lower semibounded.
\item[{\rm (iii)}]  If $\mathcal{L}:=\cL_0+|D|^{-1}V$ is the self-adjoint operator on $\dot{H}^{\frac12}(\R)$ associated with the form  $\gt_V$, then the resolvent difference of $\cL$ and $\cL_0$ is compact for every $z \in \rho(\cL_0)\cap \rho(\cL)$ and hence their essential spectra coincide,
	\[
	\sigma_{\ess}(\cL) = \sigma_{\ess}(\cL_0) = [2bc,\infty).
	\]
\end{itemize}
\end{lemma}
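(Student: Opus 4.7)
The plan is to reduce all three assertions to a single form-compactness argument based on the Rellich--Kondrachov theorem. Decompose $V = V_R + V^{(R)}$ with $V_R := V\,\indik_{[-R,R]}$; condition \eqref{eq:cond_int_limit_0} expresses exactly that $M_{V^{(R)}} := \sup_n \int_n^{n+1} |V^{(R)}(s)|\,ds \to 0$ as $R \to \infty$.

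For (i), I would treat the two pieces separately. For the tail $V^{(R)}$, the Kato-type bound \eqref{eq:est_multiplicationbysqrt} with $\eps = 1$, combined with the identity $c^2 \|u'\|^2_{L^2} + b^2 \|u\|^2_{L^2} = \gt_0[u]$, gives $|\gq_{V^{(R)}}[u]| \leq C(b,c)\, M_{V^{(R)}}\, \gt_0[u]$, which is below any prescribed multiple of $\gt_0[u]$ for $R$ large. For the compactly supported piece $V_R$, I would establish the stronger property that $\gq_{V_R}$ is \emph{form-compact} with respect to $\gt_0$: if $(u_n) \subset H^1(\R)$ is bounded in the graph norm of $\gt_0$ (which is equivalent to the $H^1(\R)$-norm since $b,c>0$) and $u_n \rightharpoonup 0$ in $\dot H^{\frac12}(\R)$, then a subsequence converges weakly to $0$ in $H^1(\R)$; the Rellich--Kondrachov theorem gives $u_n \to 0$ in $L^2_{\loc}(\R)$; and \eqref{eq:est_multiplicationbysqrt} applied to $V_R$ and $\chi u_n$ (for $\chi \in C_c^\infty(\R)$ identically $1$ on $[-R,R]$), together with $|\gq_{V_R}[u_n]| \leq \|V_R^{1/2}(\chi u_n)\|^2_{L^2}$, forces $\gq_{V_R}[u_n] \to 0$ upon sending first $n \to \infty$ and then the auxiliary Kato parameter $\eta \to 0$. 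A standard argument shows that a form-compact perturbation is automatically infinitesimally form-bounded, completing (i).

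Assertion (ii) is then immediate from the KLMN theorem (Theorem~\ref{th_KLMN}) applied to $\gt_0$ and the infinitesimally form-bounded perturbation $\gq_V$. For (iii), the form-compactness of $\gq_{V_R}$ relative to $\gt_0$ translates, by the standard quadratic-form version of Weyl's theorem, into compactness of the resolvent difference $(\cL_{V_R} + M)^{-1} - (\cL_0 + M)^{-1}$ for $M$ large, where $\cL_{V_R} := \cL_0 + |D|^{-1}V_R$. Meanwhile, the small relative form bound of $\gq_{V^{(R)}}$ translates via the second resolvent identity into operator-norm convergence $(\cL + M)^{-1} \to (\cL_{V_R} + M)^{-1}$ as $R \to \infty$. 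Consequently $(\cL + M)^{-1} - (\cL_0 + M)^{-1}$ is a norm limit of compact operators and hence compact; composing with bounded operators via the resolvent identity passes compactness to any $z \in \rho(\cL) \cap \rho(\cL_0)$, and Weyl's theorem gives $\sigma_{\ess}(\cL) = [2bc,\infty)$ by Lemma~\ref{lem:2.01}.

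The main obstacle is the form-compactness of $\gq_{V_R}$ in the ambient space $\dot H^{\frac12}(\R)$, whose norm is degenerate at $\lambda = 0$ and does not by itself control local $L^2$-mass; the key point making the argument go through is that $\gt_0$-bounded sequences are automatically $H^1(\R)$-bounded (precisely because $b,c>0$), which is what allows one to invoke Rellich--Kondrachov despite the degeneracy of the ambient norm.
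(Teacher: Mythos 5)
Your argument is correct, but it takes a genuinely different route from the paper's. For (i), the paper does not split the potential: it proves the weighted Sobolev-type bound \eqref{eq:Sobolev_dot}, $\sup_x|u(x)|^2\le \tfrac{4\log(2r)}{r}\|u\|_{H^1}^2+8\log(2r)\|u\|_{\dot{H}^{\frac12}}^2$, by a direct Fourier/Cauchy--Schwarz computation, and then splits the \emph{integral} $\int_\R |V||u|^2$ into a central window (controlled by this bound, the factor $\log(2r)/r\to 0$ supplying the arbitrarily small coefficient of $\|u\|_{H^1}^2$) and far tails (controlled by \eqref{eq:ineq_sup_h1} together with \eqref{eq:cond_int_limit_0}); this is more quantitative and yields explicit constants $K_\delta$. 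You instead split the \emph{potential}, $V=V_R+V^{(R)}$, handle the tail by Kato's bound \eqref{eq:est_multiplicationbysqrt} with $M_{V^{(R)}}\to 0$, and get infinitesimal boundedness of the compactly supported piece from its $\gt_0$-form-compactness (Rellich--Kondrachov) via the abstract implication ``form-compact $\Rightarrow$ infinitesimally form-bounded'' noted after Theorem \ref{th2.2}; your observation that $\gt_0$-bounded sequences are $H^1$-bounded (because $b,c>0$) plays exactly the role the $H^1$-norm plays in the paper's estimates, and is indeed the point that neutralizes the degeneracy of $\dot{H}^{\frac12}$ at $\lambda=0$. For (iii), the paper obtains compactness of the \emph{full} form $\gq_V$ in one stroke from Birman's embedding criterion ($H^1(\R)\hookrightarrow L^2(\R;|V|)$ compactly iff \eqref{eq:cond_int_limit_0}) combined with Lemma \ref{lem3.1}, and then applies Theorem \ref{th2.2}; you prove form-compactness only for the truncations (essentially a hands-on special case of Birman's criterion) and recover $\cL$ through a norm-resolvent limit as $R\to\infty$ --- an extra approximation step, but one that keeps the proof self-contained and avoids citing the external embedding theorem. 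Two details to nail down in a write-up: the convergence $(\cL+M)^{-1}\to(\cL_{V_R}+M)^{-1}$ should be justified by the form analogue of the resolvent identity (a bound on the sesquilinear form $\gq_{V^{(R)}}$ relative to the form norms, with $M$ chosen from a lower bound uniform in large $R$), not the operator second resolvent identity; and passing from form-compactness to compactness of the resolvent difference requires the standard factorization through $(\cL_0+M)^{-1/2}$, since Theorem \ref{th2.2} as quoted asserts only equality of the essential spectra. Neither point is a gap, both being standard.
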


\begin{proof}
	(i) First of all, by the Cauchy--Schwarz inequality we get
	\begin{equation*}
		\left( \int_{\R} |\hat{u}(\lambda)| \; d\lambda \right)^2  \leq \left ( \int_{\R} \frac{d \lambda}{1 + |\lambda|^2 + 2r |\lambda|}   \right) (\| u \|_{H^1}^2 + 2r \|u\|_{\dot{H}^{\frac12}})
	\end{equation*}
	for every $u \in H^1(\R)$ and $r>0$. 	Moreover,
\begin{align*}
	\int_\R \frac{d \lambda}{1 + |\lambda|^2 + 2r |\lambda|} &
	=2\int_{r}^\infty \frac{d \lambda}{\lambda^2 -(r^{2}-1)} 
	   = \frac{1}{\sqrt{r^{2}-1}} \log\left(\frac{r + \sqrt{r^{2} - 1}}{r - \sqrt{r^{2}-1}} \right)\\
	    =  &\frac{2}{\sqrt{r^{2}-1}} \log(r + \sqrt{r^2-1}) \le \frac{2\log(2r)}{\sqrt{r^{2}-1}} \le \frac{4 \log(2r)}{r}
\end{align*}
	for every $r\ge 2/\sqrt{3}$. This means that the estimate
	\begin{equation}\label{eq:Sobolev_dot}
			\sup_{x \in \R} \; |u(x)|^2 \leq \frac{4 \log(2r)}{r} \|u\|_{H^1}^2 + 8\log(2r)\|u\|^2_{\dot{H}^{\frac12}}
	\end{equation}
holds  for all $u\in H^1(\R)$ whenever $r \ge 2/\sqrt{3}$.	Now using inequality (IV.1.19) from \cite{kato}
\begin{equation} \label{eq:ineq_sup_h1}
  	\sup_{x \in [a,b]}  \; |v(x)| \; \leq \; \sqrt{\frac{(b-a)}{2m+3}}\|v'\|_{L^2} + \frac{m+1}{\sqrt{(b-a)(2m+1)}}\|v\|_{L^2},  
\end{equation}
which holds for all $v \in H^1([a,b])$ and $m\in \R_{>0}$, we arrive at the estimate 
	\begin{align*}
		|\gq_V[u]| \leq & \sum_{|n|\geq N} \int_n^{n+1} |V(s)| |u(s)|^2 \; ds + \int_{-N+1}^{N} |V(s)| |u(s)|^2 \; ds \\
		\leq &   \|u\|_{H^1(\R)}^2 \sup_{|x|\geq N} \int_x^{x+1} |V(s)|  \; ds \\
		&+  \left ( \frac{4 \log(2r)}{r} \|u\|_{H^1(\R)}^2 + 8\log(2r) \| u \|_{\dot{H}^{\frac12}(\R)}^2 \right) \int_{-N+1}^{N} |V(s)| \; ds
	\end{align*}
	for every $N \in \N$ and $r \ge 2/\sqrt{3}$. Using \eqref{eq:cond_int_limit_0} and noting that $\lim_{r\to \infty} \frac{\log(2r)}{r} =0$, we conclude that for every $\delta >0$ there is a constant $K_\delta>0$ such that 
	\begin{equation*}
		| \gq_V[u]| \leq \delta \|u\|_{H^1}^2 + K_\delta \|u\|_{\dot{H}^{\frac12}}^2
	\end{equation*}
holds for every $u\in \dom(\gt_0)=H^1(\R)$. Hence the form $\gq_V$ is infinitesimally form bounded with 
respect to $\gt_0$. 
	
(ii) Follows from (i) and the KLMN Theorem \ref{th_KLMN}. 

(iii) By (ii) and the first representation theorem \cite[Chapter VI.2.1]{kato}, the form $\gt_V$ gives rise to a semi-bounded, self-adjoint operator $\cL=\cL_0+|D|^{-1}V$ (defined as a form sum) on $\dot{H}^{\frac12}(\R)$ with $\dom(\cL)\subset H^1(\R)$ and $\dom(\cL^{1/2}) = H^1(\R)$. By \cite[Theorem 2.1]{birman}, $H^1(\R)$ is compactly embedded into $L^2(\R;|V|)$ if and only if $V$ satisfies \eqref{eq:cond_int_limit_0}. Hence, by Lemma \ref{lem3.1}, the form $\gq_V$ is compact on $\dom(\cL_0^{1/2})$ equipped with the graph norm if  $V$ satisfies \eqref{eq:cond_int_limit_0} and it remains to apply  Birman's Theorem \ref{th2.2}.
\end{proof}

\begin{remark}
It is known that the form $\gq_V$ given by \eqref{eq:gq_V} and considered as a form in $L^2(\R)$ is infinitesimally form bounded with respect to the free Hamiltonian $H_0 := -\frac{d^2}{dx^2}$ if $V$ satisfies \eqref{eq:cond_int_bd}. However, in the space $\dot{H}^{\frac12}$ we were able to prove that $\gq_V$ is $\gt_0$-infinitesimally form bounded under the additional assumption \eqref{eq:cond_int_limit_0}. On the other hand, as in the case of $L^2(\R)$, condition \eqref{eq:cond_int_limit_0} is necessary and sufficient for the form $\gq_{|V|}$ to be relatively compact with respect to $\gt_0$.
\end{remark}

\subsection{A bound on the number of negative eigenvalues}\label{sec:II.3}
Assume that $V \in L^1_\loc(\R)$ is a real-valued function satisfying \eqref{eq:cond_int_limit_0}. By Lemma \ref{lem:relcomp_birman}, the spectrum of $\cL$ in $(-\infty, 2bc)$ consists of eigenvalues which may accumulate only at $E_0=2bc$. Our next aim is to derive a bound on the number of negative eigenvalues of $\cL$. We denote by $\kappa_-(T)$ the total multiplicity of the negative spectrum of a self-adjoint operator $T=T^\ast$, $\kappa_-(T) := \dim P_{(-\infty,0)}(T)$, where $P_{\Omega}(T)$ is the spectral projection. 

We begin with an estimate which follows from the classical Bargmann bound for 1-D Schr\"odinger operators on the line.

\begin{lemma}\label{lem:bargmann}
Let $V\in L^1_{\loc}(\R)$ satisfy \eqref{eq:cond_int_limit_0} and let $\cL$ be the corresponding self-adjoint operator in $\dot{H}^{\frac12}(\R)$. Then 
\be\label{eq:kappa-}
\kappa_-(\cL) = \kappa_-(H_V),
\ee
where $H_V$ is the Sch\"odinger operator \eqref{eq:l_V} defined on the maximal domain in $L^2(\R)$. 

If $V(x)\ge -b^2$ for a.e. $x\in\R$, then $\cL\ge 0$. Otherwise, 
\be\label{eq:bargmann}
\kappa_-(\cL) \le 1+\frac{1}{c^2}\int_{\R} |x| |\min\{V(x)+b^2,0\}| dx.
\ee 
\end{lemma}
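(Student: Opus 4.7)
The lemma has two largely independent parts: the identification \eqref{eq:kappa-} and the Bargmann-type estimate \eqref{eq:bargmann}. For \eqref{eq:kappa-}, my plan is to exploit the fact that both $\cL$ and $H_V$ are the lower semi-bounded self-adjoint operators associated, via the first representation theorem, with the \emph{same} quadratic form
\[
\gt_V[u] = c^2\|u'\|_{L^2}^2 + b^2\|u\|_{L^2}^2 + \int_{\R}V(x)|u(x)|^2\,dx,\qquad \dom(\gt_V)=H^1(\R),
\]
but viewed in two different ambient Hilbert spaces. Under \eqref{eq:cond_int_limit_0}, Lemma \ref{lem:relcomp_birman} gives $\sigma_{\ess}(\cL)=[2bc,\infty)$, while Birman's criterion \cite{birman} gives $\sigma_{\ess}(H_V)=[b^2,\infty)$; both essential spectra are separated from zero. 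The min-max principle in each Hilbert space then yields
\[
\kappa_-(T) = \max\bigl\{\dim L : L\subset H^1(\R),\ \gt_V[u]<0\ \text{for all}\ u\in L\setminus\{0\}\bigr\}
\]
for $T\in\{\cL,H_V\}$, and since the right-hand side depends only on $\gt_V$ and its domain, the two counts agree.

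The nonnegativity claim is then immediate: if $V+b^2\ge 0$ a.e., then $\gt_V[u]\ge c^2\|u'\|_{L^2}^2\ge 0$, so $\cL\ge 0$. To establish \eqref{eq:bargmann}, by \eqref{eq:kappa-} it suffices to bound $\kappa_-(H_V)$. Setting $U:=(V+b^2)/c^2$ and decomposing $U=U_+-U_-$ with $U_\pm\ge 0$, the form inequality $c^{-2}H_V\ge -\tfrac{d^2}{dx^2}-U_-$ on $H^1(\R)$ combined with monotonicity of the variational eigenvalues gives
\[
\kappa_-(H_V) = \kappa_-(c^{-2}H_V) \le \kappa_-\Bigl(-\tfrac{d^2}{dx^2}-U_-\Bigr).
\]
The classical Bargmann bound on the line, applied to the purely attractive potential $-U_-$, then yields
\[
\kappa_-\Bigl(-\tfrac{d^2}{dx^2}-U_-\Bigr) \le 1+\int_{\R}|x|\,U_-(x)\,dx = 1+\frac{1}{c^2}\int_{\R}|x|\,|\min(V(x)+b^2,0)|\,dx,
\]
which is \eqref{eq:bargmann}. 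If the right-hand side is infinite there is nothing to prove; otherwise $U_-\in L^1(\R,(1+|x|)dx)$ and Bargmann applies in its standard form.

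The only genuinely delicate point is the variational identification in \eqref{eq:kappa-} inside the unfamiliar space $\dot{H}^{\frac12}(\R)$: one direction requires $\ran(P_{(-\infty,0)}(\cL))\subset\dom(\cL)\subset H^1(\R)$ and that $\gt_V<0$ on this finite-dimensional range, while the other direction is the linear-algebra fact that any $L\subset H^1(\R)$ with $\dim L>\kappa_-(\cL)$ must meet $\ran(P_{[0,\infty)}(\cL))$ nontrivially, forcing some $0\neq u\in L$ with $\gt_V[u]\ge 0$. Both arguments go through precisely because $\inf\sigma_{\ess}(\cL)>0$ makes the spectral projection onto $(-\infty,0)$ finite rank; once this is verified, the argument is insensitive to the ambient inner product.
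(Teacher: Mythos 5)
Your proposal is correct and follows essentially the same route as the paper: the identity $\kappa_-(\cL)=\kappa_-(H_V)$ comes from the fact that both operators are generated by the same form $\gt_V$ on $H^1(\R)$ together with the minimax (Glazman) characterization of the negative count, nonnegativity when $V+b^2\ge 0$ is read off the form, and \eqref{eq:bargmann} is the classical Bargmann bound applied to the rescaled Schr\"odinger operator $-\tfrac{d^2}{dx^2}+\tfrac{1}{c^2}(V+b^2)$. Your write-up merely makes explicit two steps the paper leaves implicit, namely the reduction to the attractive part $U_-$ and the finite-rank/dimension-count argument behind the variational identification, both of which are standard and correct.
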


\begin{proof}
As it was already mentioned, the form $\gt_V$ if considered in $L^2(\R)$ instead of $\dot{H}^{\frac12}(\R)$ coincides with the form of the Schr\"odinger operator $H_V$ defined by \eqref{eq:l_V} on the maximal domain. Therefore, the minimax principle implies \eqref{eq:kappa-}. 

Clearly, the form $\gt_V$ given by \eqref{eq:formL} is nonnegative if so is the potential $V+b^2$ and hence $\cL$ is nonnegative in this case. 
Applying the classical Bargmann estimate to $H_V$ (see \cite[Theorem 7.5]{simon_ti}), we end up with \eqref{eq:bargmann}.
\end{proof}

The next bound again follows from the standard estimates for 1-D Schr\"odinger operators. 

\begin{lemma}\label{lem:2.9}
Assume $V\in L^1(\R)$ is real valued and set $V_-:=(V-|V|)/2$. 
Then
\be\label{eq:BSestimate}
	\dim P_{(-\infty,0]}(\cL) \leq  \frac{1}{2bc} \int_{\R} |V_-(y)| \; dy.	
\ee
\end{lemma}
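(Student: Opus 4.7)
The plan is to reduce the stated inequality to a Birman--Schwinger type trace bound, formulated purely at the level of quadratic forms so as to sidestep the usual endpoint issues at $E=0$.

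First I would observe, exactly as in Lemma \ref{lem:bargmann}, that
\[
\dim P_{(-\infty,0]}(\cL) \;=\; \dim P_{(-\infty,0]}(H_V),
\]
since by the minimax principle both sides equal the supremum, over subspaces $M \subset H^1(\R)$, of $\dim M$ subject to $\gt_V[u] \le 0$ for all $u \in M$, and this condition depends only on the common form $\gt_V$. Splitting $V = V_+ + V_-$ with $V_+ \ge 0$ yields the form inequality $\gt_V \ge \gt_{V_-}$, so another application of minimax gives
\[
\dim P_{(-\infty,0]}(H_V) \;\le\; \dim P_{(-\infty,0]}(H_{V_-}),
\]
where $H_{V_-} := -c^2 \frac{d^2}{dx^2} + b^2 + V_-$. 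It therefore suffices to bound the right-hand side.

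Set $H_0' := -c^2 \frac{d^2}{dx^2} + b^2$, a strictly positive self-adjoint operator in $L^2(\R)$ with $H_0' \ge b^2$. For $u \in H^1(\R) = \dom((H_0')^{1/2})$ the condition $\gt_{V_-}[u] \le 0$ rewrites as $(H_0' u, u)_{L^2} \le (|V_-| u, u)_{L^2}$. Substituting $v := (H_0')^{1/2} u$, which is a bijection from $H^1(\R)$ onto $L^2(\R)$, and introducing the bounded operator $B := |V_-|^{1/2}(H_0')^{-1/2}$ on $L^2(\R)$, this becomes $\|v\|_{L^2}^2 \le (B^\ast B v, v)_{L^2}$. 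A third application of minimax identifies the maximal dimension of such a subspace with $\dim P_{[1,\infty)}(B^\ast B)$, which coincides with $\dim P_{[1,\infty)}(BB^\ast)$ since $B^\ast B$ and $BB^\ast$ share their non-zero spectrum (with multiplicities).

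Finally I would compute. The Green's function of $H_0'$ on $\R$ is $G(x,y) = \frac{1}{2bc}\,\E^{-b|x-y|/c}$, so $BB^\ast = |V_-|^{1/2}(H_0')^{-1} |V_-|^{1/2}$ is a nonnegative integral operator with kernel $\frac{|V_-(x)|^{1/2} |V_-(y)|^{1/2}}{2bc}\,\E^{-b|x-y|/c}$; hence it is trace class with
\[
\tr(BB^\ast) \;=\; \frac{1}{2bc} \int_\R |V_-(y)|\,dy.
\]
Since the number of eigenvalues of a positive operator that are $\ge 1$ is bounded by its trace, \eqref{eq:BSestimate} follows.

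The main delicate point is the second step above, where the substitution $v = (H_0')^{1/2}u$ converts a form inequality into the spectral count $\dim P_{[1,\infty)}(B^\ast B)$; this form-theoretic variant of the Birman--Schwinger principle is precisely what lets us avoid the usual limit argument at the endpoint $E=0$ (classical Birman--Schwinger is typically stated for strict inequalities).
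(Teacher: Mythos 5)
Your proposal is correct and follows essentially the same route as the paper: reduce the count for $\cL$ to the Schr\"odinger operator $H_V$ (the paper does this via $\kappa_-(\cL)=\kappa_-(H_V)$ together with $\ker(\cL)=\ker(H_V)$, you via a norm-independent minimax count for the common form $\gt_V$, which is legitimate since both essential spectra lie in $(0,\infty)$), and then apply the Birman--Schwinger principle with the explicit resolvent kernel $\frac{1}{2bc}\E^{-b|x-y|/c}$ and a trace bound. Your form-level formulation of Birman--Schwinger, which handles the endpoint $E=0$ directly, is a careful spelling-out of the step the paper leaves as ``it is not difficult to show''.
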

 
\begin{proof}
Notice that $\kappa_-(H_V)$ is equal to the number of eigenvalues of the 1-D Schr\"o\-din\-ger operator $-\frac{d^2}{dx^2} + \frac{1}{c^2}V(x)$ lying below $-(b/c)^2$. Since the resolvent of the free Hamiltonian $H_0$ is given by
\[
(H_0+\lambda^2)^{-1} g = \frac{1}{2\lambda}\int_{\R} \E^{-\lambda|x-y|}g(y)dy,\quad \lambda>0,
\]
and then using the Birman--Schwinger principle (see, e.g., \cite[Chapter VII]{simon_ti}), it is not difficult to show that 
\[
\dim P_{(-\infty,0]}(H_V) \leq  \frac{1}{2bc} \int_{\R} |V_-(y)| \; dy.
\]
It remains to use \eqref{eq:kappa-} and the fact that $\ker(\cL) = \ker(H_V)$ (see Corollary \ref{cor:ker=ker}). 
\end{proof} 

\begin{remark}
It is possible to apply the Birman--Schwinger approach to the operator $\cL$ and then, for example, to investigate the number of eigenvalues of $\cL$ lying below the threshold $E_0=2bc$. However, these results are not needed for our purposes and hence we do not touch this issue here.
\end{remark}

\section{The indefinite spectral problem}\label{sec:III}

\subsection{The unperturbed case} \label{sec:III.1}
Our main aim is to investigate spectral properties of the problem \eqref{eq:indSpec}. In contrast to \cite{ss}, we are going to consider it in the space $\dot{H}^{\frac12}(\R)$. As in the previous section, we begin with the unperturbed case $V\equiv 0$,
\be
-c^2 y'' + b^2y = -\I\,z\, y'.
\ee
First, we define the operator  $\cA_0$  in $\dot{H}^{\frac12}(\R)$ as follows
\be
\begin{array}{cccc}
\cA_0 \colon & \dom(\cA_0) & \to & \dot{H}^{\frac12}(\R), \\[1mm]
  & u & \mapsto & c^2 Du + b^2D^{-1}u,
\end{array}
\ee
where $\dom(\cA_0)$ is the maximal domain,
\be 
 \dom(\cA_0) = \{u\in \dot{H}^{\frac12}(\R)\colon Du,\ D^{-1}u\in \dot{H}^{\frac12}(\R)\}.
\ee
Here $D^n$, $n\in\Z$ is defined via the Fourier transform $\mathcal{F}$ by
\be\label{eq:D}
D^n\colon u \mapsto \mathcal{F}^{-1} (\lambda^n \hat{u}(\lambda)).
\ee
Clearly, for positive $n\in\N$, $D^n u = (-\I)^n u^{(n)}$ for all suitable functions $u$. Notice also that $\dom(\cA_0)=\dom(\cL_0)$. As in the case of the operator $\cL_0$, spectral properties of $\cA_0$ can easily be described by using the Fourier transform.

\begin{lemma}\label{lem:3.01}
The operator $\cA_0$ is self-adjoint and 
\[
\sigma(\cA_0) = \sigma_{\ac}(\cA_0) = (-\infty,-2bc]\cup [2bc,\infty),\quad \sigma_{\sing}(\cA_0) = \emptyset.
\]
\end{lemma}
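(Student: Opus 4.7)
The plan mirrors the proof of Lemma \ref{lem:2.01}: transport $\cA_0$ through the isometric isomorphism $j$ of \eqref{eq:j0} to a multiplication operator on $L^2(\R)$, then read off the spectral information from that representation. Concretely, let $\wh\cA_0$ be the multiplication operator
\[
\wh\cA_0\colon u(\lambda)\mapsto \bigl(c^2\lambda + b^2\lambda^{-1}\bigr) u(\lambda)
\]
on $L^2(\R)$, with maximal domain $\{u\in L^2(\R): (|\lambda|+|\lambda|^{-1})u\in L^2(\R)\}$. From \eqref{eq:D} it is clear that $j$ conjugates $D^n$ to multiplication by $\lambda^n$, so a direct computation yields $\cA_0 = j^{-1}\wh\cA_0\,j$ together with a bijection of domains, exactly as in \eqref{eq:domL0}. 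This reduces the lemma to the corresponding statement for $\wh\cA_0$.

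Self-adjointness of $\wh\cA_0$ is automatic, since it is multiplication by the real-valued measurable function $f(\lambda) := c^2\lambda + b^2\lambda^{-1}$ on $\R\setminus\{0\}$. For the spectrum, one analyses $f$: the derivative $f'(\lambda) = c^2 - b^2\lambda^{-2}$ vanishes only at $\lambda=\pm b/c$, where $f(\pm b/c) = \pm 2bc$. A short monotonicity argument shows $f\bigl((0,\infty)\bigr) = [2bc,\infty)$ and $f\bigl((-\infty,0)\bigr) = (-\infty,-2bc]$, so the essential range of $f$, which equals $\sigma(\wh\cA_0)$, is $(-\infty,-2bc]\cup[2bc,\infty)$.

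For the absolute continuity statement, decompose $\R\setminus\{0,\pm b/c\}$ into the four open intervals on which $f$ is a smooth strictly monotone bijection onto its image, and change variables $\mu = f(\lambda)$ on each piece. This expresses $\wh\cA_0$ as the orthogonal direct sum of four copies of multiplication by $\mu$ on $L^2$ spaces with weights $|f'(\lambda(\mu))|^{-1}$, which are absolutely continuous with respect to Lebesgue measure. Hence every spectral measure of $\wh\cA_0$ is absolutely continuous, so $\sigma_{\sing}(\cA_0)=\emptyset$ and $\sigma_{\ac}(\cA_0)$ coincides with the full spectrum.

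None of the steps is genuinely difficult; the only place that requires a moment's care is the change of variables at the turning points $\lambda = \pm b/c$, where $f'$ vanishes so that the weights $|f'|^{-1}$ blow up near the spectral edges $\pm 2bc$. This causes no problem because $\{\pm b/c\}$ is a null set and the resulting weights remain locally integrable as densities on the spectrum, so the overall measure is still absolutely continuous.
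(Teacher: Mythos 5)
Your argument is correct, but it takes a more self-contained route than the paper. The paper's proof is a one-liner built on structure it reuses later: it introduces the operator $\cJ=\I\cH$ (multiplication by $\sign(\lambda)$ on the Fourier side), notes that $\cJ=\cJ^\ast=\cJ^{-1}$ commutes with $\cL_0$ and that $\cA_0=\cJ\cL_0$, and then simply invokes Lemma \ref{lem:2.01}; the factorization through $\cJ$ is not incidental, since the same operator furnishes the Krein space structure in Section \ref{sec:III.2}. You instead bypass $\cJ$ and Lemma \ref{lem:2.01} entirely and diagonalize $\cA_0$ directly: conjugating by $j$ as in \eqref{eq:domL0} gives multiplication by $f(\lambda)=c^2\lambda+b^2\lambda^{-1}$ on $L^2(\R)$ (your domain identification via \eqref{eq:j0} and \eqref{eq:D} is right, since $|f(\lambda)|=c^2|\lambda|+b^2|\lambda|^{-1}$), self-adjointness and $\sigma(\cA_0)=\mathrm{ess\,ran}(f)=(-\infty,-2bc]\cup[2bc,\infty)$ follow from the elementary analysis of $f$, and absolute continuity follows from the change of variables on the four monotonicity intervals; your remark that the critical points $\pm b/c$ form a null set and the densities $|f'|^{-1}$ only blow up like an integrable inverse square root at the edges $\pm 2bc$ disposes of the only delicate point. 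What the paper's route buys is brevity and the reuse of $\cJ$; what yours buys is an explicit spectral representation of $\cA_0$ with no reliance on the earlier lemma, at the cost of repeating the multiplication-operator analysis already done for $\cL_0$.
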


\begin{proof}
Define the operator $\cJ$ on $\dot{H}^{\frac12}$ by 
\be\label{eq:cJ}
\begin{array}{cccc}
\cJ \colon & \dot{H}^{\frac12} & \to & \dot{H}^{\frac12} \\
 & u & \mapsto & \cF^{-1}(\sign(\lambda)\hat{u}).
\end{array}
\ee
Clearly, $\cJ = \I \cH$, where $\cH$ is the Hilbert transform and $\cJ=\cJ^\ast =\cJ^{-1}$. Moreover, it commutes with $\cL_0$ and $\cA_0=\cJ\cL_0$.
This and Lemma \ref{lem:2.01} complete the proof.
\end{proof}

\subsection{The case $V\not\equiv 0$}\label{sec:III.2}

The operator $\cJ$ induces a Krein space structure on $\dot{H}^{\frac12}(\R)$. Namely, consider the inner product
\be
\langle f,g\rangle_{\dot{H}^{\frac12}}:= (\cJ f,g)_{\dot{H}^{\frac12}} = \int_{\R}\hat{f}(\lambda)\hat{g}(\lambda)^\ast\, \lambda\,d\lambda,\quad f,g \in \dot{H}^{\frac12}(\R).
\ee
Then  $\cK:=(\dot{H}^{\frac12}(\R),\langle \cdot,\cdot\rangle_{\dot{H}^{\frac12}})$ is a Krein space (see \cite{azio, bognar,lan}). Now we set 
\be\label{eq:cA}
\cA:= \cJ\cL,
\ee
where $\cL$ is the operator introduced in Section \ref{sec:II.2}. Since $\cL=\cL^*$, the operator $\cA$ is self-adjoint in the Krein space $\cK$.

\begin{lemma}\label{lem:defin}
Assume that $V\in L^1_{\loc}(\R)$ is real valued and satisfies \eqref{eq:cond_int_limit_0}. Then the operator $\cA$ is {\em definitizable}, that is, $\rho(\cA)\neq \emptyset$ and there is a real polynomial $p$, called {\em a definitizing polynomial}, such that 
\be\label{eq:definitiz}
\big\langle\,p(\cA)f,f\,\big\rangle_{\dot{H}^{\frac12}} \ge 0
\ee
for all $f\in \dom(\cA^{\deg(p)})$.
\end{lemma}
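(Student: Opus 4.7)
The plan is to realize $\cA$ as a bounded, finite-rank, Krein self-adjoint perturbation of a Krein nonnegative self-adjoint operator, and then to invoke the classical preservation of definitizability under such perturbations.

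First, I would exploit the spectral structure of $\cL$ established in Section~\ref{sec:II}. By Lemma~\ref{lem:relcomp_birman}(iii) one has $\sigma_{\ess}(\cL)=[2bc,\infty)$, and by Lemma~\ref{lem:bargmann} the number $N:=\kappa_-(\cL)$ of negative eigenvalues of $\cL$ is finite; moreover $\dim\ker(\cL)\le 1$, as noted in the introduction. Let $P_-$ denote the spectral projection of $\cL$ onto $(-\infty,0)$ and $P_0$ the orthogonal projection of $\dot{H}^{\frac12}(\R)$ onto $\ker(\cL)$; both are self-adjoint in $\dot{H}^{\frac12}(\R)$ and of finite rank (at most $N$ and $1$, respectively). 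Choosing $\mu>0$ larger than the absolute value of the smallest (most negative) eigenvalue of $\cL$, set
\[
R:=\mu P_- + P_0,\qquad \tilde{\cL}:=\cL+R.
\]
A direct spectral computation then gives $\tilde{\cL}\ge \eps_0 I$ for some $\eps_0>0$, so $\tilde{\cL}$ is boundedly invertible on $\dot{H}^{\frac12}(\R)$.

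Second, I would introduce $\tilde{\cA}:=\cJ\tilde{\cL}$ on $\dom(\tilde{\cL})=\dom(\cL)$. Since $\cJ$ is unitary on $\dot{H}^{\frac12}(\R)$ and $\tilde{\cL}$ is boundedly invertible, so is $\tilde{\cA}$, with $\tilde{\cA}^{-1}=\tilde{\cL}^{-1}\cJ$; in particular $0\in\rho(\tilde{\cA})$. Moreover, $\tilde{\cA}$ is self-adjoint in the Krein space $\cK$, and for every $f\in\dom(\tilde{\cA})$,
\[
\langle \tilde{\cA}f,f\rangle_{\dot{H}^{\frac12}} = (\cJ\tilde{\cA}f,f)_{\dot{H}^{\frac12}} = (\tilde{\cL}f,f)_{\dot{H}^{\frac12}}\ge 0,
\]
so that $\tilde{\cA}$ is Krein nonnegative. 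Thus $\tilde{\cA}$ is definitizable with the degree-one polynomial $p_0(\lambda)=\lambda$.

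Third, the identity $\cA=\cJ\cL=\tilde{\cA}-\cJ R$ realizes $\cA$ as a perturbation of $\tilde{\cA}$ by $K:=\cJ R$. Since $R$ is bounded and of finite rank at most $N+1$, so is $K$. Moreover, $K$ is Krein self-adjoint because $R$ is Hilbert self-adjoint:
\[
\langle Kf,g\rangle_{\dot{H}^{\frac12}} = (Rf,g)_{\dot{H}^{\frac12}} = (f,Rg)_{\dot{H}^{\frac12}} = \langle f,Kg\rangle_{\dot{H}^{\frac12}}.
\]
Invoking the classical result that definitizability is preserved under bounded finite-rank Krein self-adjoint perturbations (see, e.g., \cite{azio,bognar,iokr,lan}), $\cA=\tilde{\cA}-K$ is definitizable, $\rho(\cA)\neq\emptyset$, and a real definitizing polynomial of degree at most $2N+3$ exists.

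The main technical point I anticipate is the careful invocation of the finite-rank perturbation theorem, which in general involves tracking the critical points of the unperturbed operator $\tilde{\cA}$ (here only $\lambda=0$). A constructive alternative---directly exhibiting a definitizing polynomial of $\cA$ via the Krein geometry of the finite-dimensional subspace $\ran(P_-)\oplus\ker(\cL)$---is available but becomes combinatorially involved for large $N$.
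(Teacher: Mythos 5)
Your proposal is correct in substance and follows essentially the same strategy as the paper: make $\cL$ boundedly invertible by a finite-rank self-adjoint perturbation, observe that the resulting $\cJ$-multiple is definitizable, and transfer definitizability back to $\cA$ via the finite-rank perturbation theorem. The differences are minor. The paper is more economical: when $0\in\rho(\cL)$ it perturbs nothing at all, since then $0\in\rho(\cA)$ and the fact that $\langle \cA f,f\rangle_{\dot{H}^{\frac12}}=(\cL f,f)_{\dot{H}^{\frac12}}$ has only $\kappa_-(\cL)<\infty$ negative squares already yields definitizability by Langer's criterion (item (c), p.~12 in \cite{lan}); only when $0$ is an (isolated) eigenvalue of $\cL$ does it add the rank-one term $\varepsilon \cJ P_0$ and invoke the perturbation theorem. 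Your variant, adding $\mu P_-+P_0$ so that the comparison operator $\tilde{\cA}$ is Krein nonnegative with $0$ in its resolvent set, buys the trivial definitizing polynomial $p(\lambda)=\lambda$ (avoiding Langer's finitely-many-negative-squares criterion) at the price of a rank-$(N{+}1)$ perturbation and of always routing through the perturbation step. The one point to repair is your last step's citation: the result you need is precisely \cite[Theorem 1]{jl} (which the paper cites and which is not to be found in \cite{azio}, \cite{bognar}, \cite{iokr}), and in its standard formulation it concerns finite-rank resolvent differences and presupposes that the perturbed operator has nonempty resolvent set; so presenting it as an unconditional statement ``bounded finite-rank Krein self-adjoint perturbations preserve definitizability, and in particular yield $\rho(\cA)\neq\emptyset$'' overstates the classical theorem. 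The paper leans on the same citation at the same spot, so this is not a defect specific to your argument, but you should state the perturbation result accurately (or verify $\rho(\cA)\neq\emptyset$ separately) rather than read $\rho(\cA)\neq\emptyset$ off as part of its conclusion.
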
 

\begin{proof}
By Lemma \ref{lem:relcomp_birman}(iii), $\kappa_-(\cL)<\infty$ if \eqref{eq:cond_int_limit_0} holds true. If $0\in \rho(\cL)$, then $0\in \rho(\cA)$ and hence the claim follows (see item (c) on p.12 in \cite{lan}). 

If $0\in \sigma(\cL)$, then,  by Lemma \ref{lem:relcomp_birman}(iii), $0$ is an isolated eigenvalue of $\cL$. Let $P_0:=P_0(\cL)$ be the orthogonal projection onto $\ker(\cL)$. Then the operator 
\[
\cA_\varepsilon:= \cJ(\cL+\varepsilon P_0) = \cA+\varepsilon \cJ P_0,\qquad \cL_\varepsilon:=\cL+\varepsilon P_0,
\]
is definitizable for all $\varepsilon>0$ since $\kappa_-(\cL_\varepsilon) = \kappa_-(\cL)<\infty$ and $0\in \rho(\cL_\varepsilon)$. It remains to note that $\cA_\varepsilon$ is a rank one perturbation of $\cA$ since $\dim\ker(\cL)=1$ and then apply \cite[Theorem 1]{jl}.
\end{proof}

It follows from the proof of item (c) on p.12 in \cite{lan}, that $p$ can be chosen such that $p(z) = zp_0(z)(p_0(z^\ast))^\ast$ with $\deg(p_0)\le \kappa_-(\cL)$. For further details on spectral theory of definitizable operators we refer to \cite{lan}. Let us only mention the following important properties of the point spectrum of $\cA$ (cf. \cite[Proposition 2.1]{lan}). If $p$ is a fixed definitizing polynomial for $\cA$, then let us denote by $k(z)$ the multiplicity of $z$ as a zero of $p$ (in particular, $k(z)=0$ if $p(z)\neq 0$). 

\begin{corollary}\label{cor:3.3}
The spectrum of a definitizable operator $\cA$ is symmetric with respect to the real axis and the nonreal spectrum of $\cA$ consists of isolated eigenvalues of total algebraic multiplicity at most $2\kappa_-(\cL)$. Moreover, every isolated eigenvalue $z$ of $\cA$ has finite Riesz index $\nu(z)$ and 
\be
\nu(z) \le \begin{cases} k(z), & z\in\C\setminus\R,\\ k(z)+1, & z\in\R. \end{cases}
\ee
\end{corollary}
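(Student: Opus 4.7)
The plan is to deduce the three statements from the general structure theory of definitizable self-adjoint operators in Krein spaces, specifically \cite[Proposition 2.1]{lan}, applied to the Krein-space self-adjoint operator $\cA$ together with the definitizing polynomial $p(z) = z p_0(z)(p_0(z^*))^*$ produced in Lemma \ref{lem:defin}.

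First I would verify that $\cA$ is self-adjoint in the Krein space $\cK$. Since $\cL = \cL^*$ on $\dot{H}^{\frac12}(\R)$ and $\cJ = \cJ^* = \cJ^{-1}$, a direct calculation yields $\langle \cA f, g\rangle_{\dot{H}^{\frac12}} = (\cL f, g)_{\dot{H}^{\frac12}} = (f, \cL g)_{\dot{H}^{\frac12}} = \langle f, \cA g\rangle_{\dot{H}^{\frac12}}$ for $f,g$ in the appropriate domain. The symmetry of $\sigma(\cA)$ about $\R$ then follows from the identity $(\cA - z^*)^{-1} = \cJ ((\cA - z)^{-1})^* \cJ$, so that $z \in \rho(\cA)$ if and only if $z^* \in \rho(\cA)$.

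Next I would invoke \cite[Proposition 2.1]{lan}: for a definitizable self-adjoint operator, the non-real spectrum consists of finitely many isolated eigenvalues, each of which must be a zero of every definitizing polynomial, each with finite algebraic multiplicity bounded by its multiplicity as a zero of $p$, and the Riesz index satisfies $\nu(z) \leq k(z)$ for $z \in \C \setminus \R$ and $\nu(z) \leq k(z) + 1$ for $z \in \R$. In our setting the non-real zeros of $p(z) = z \cdot p_0(z)(p_0(z^*))^*$ are exactly the non-real zeros of $p_0$ together with their complex conjugates, each appearing with the corresponding multiplicity. A direct count shows that the total multiplicity of the non-real zeros of $p$ equals twice the number of non-real roots of $p_0$ counted with multiplicity, and is therefore at most $2 \deg(p_0) \leq 2\kappa_-(\cL)$. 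This gives the claimed bound on the total algebraic multiplicity of the non-real spectrum, and simultaneously yields the Riesz-index bounds.

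The key substantive input is \cite[Proposition 2.1]{lan}, whose proof rests on a delicate analysis of how the indefinite inner product pairs with Jordan chains at an eigenvalue: root subspaces at complex-conjugate eigenvalues are individually $\cJ$-neutral and $\cJ$-dually paired, while root subspaces at a real eigenvalue carry a signature pattern whose sign must be compatible with $\langle p(\cA) \cdot , \cdot \rangle_{\dot{H}^{\frac12}} \geq 0$. Proving this from scratch in the Krein-space generality would be the main obstacle, but since it is a standard black-box result, the corollary follows at once from Lemma \ref{lem:defin}.
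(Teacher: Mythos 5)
Your overall route---Krein-space self-adjointness of $\cA$ plus Langer's structure theory for definitizable operators---is the same one the paper takes (the corollary is stated there with a bare reference to \cite[Proposition 2.1]{lan}), and the parts of your argument concerning the symmetry of the spectrum (via $\cA^*=\cJ\cA\cJ$) and the Riesz-index bounds $\nu(z)\le k(z)$ for $z\in\C\setminus\R$, $\nu(z)\le k(z)+1$ for $z\in\R$, are exactly what that proposition delivers.

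The gap is in your count of the total algebraic multiplicity of the nonreal spectrum. You assert that the algebraic multiplicity of a nonreal eigenvalue is bounded by its multiplicity as a zero of $p$ and then sum over the nonreal zeros of $p(z)=z\,p_0(z)(p_0(z^*))^*$ to get the bound $2\deg(p_0)\le 2\kappa_-(\cL)$. But a definitizing polynomial controls only the Riesz index (the order of the pole of the resolvent, i.e.\ the maximal length of Jordan chains), not the dimension of the root subspace. For a general definitizable operator these are unrelated: take $\gK=\gH\oplus\gH$ with the indefinite inner product $\langle (x_1,x_2),(y_1,y_2)\rangle=(x_1,y_2)_{\gH}+(x_2,y_1)_{\gH}$ and $A(x_1,x_2)=(\I x_1,-\I x_2)$; then $A$ is self-adjoint in this Krein space and definitizable with $p(z)=z^2+1$ (indeed $p(A)=0$), so $k(\pm\I)=1$, yet the eigenspaces at $\pm\I$ are infinite dimensional. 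Thus the step ``total nonreal algebraic multiplicity $\le$ total multiplicity of the nonreal zeros of $p$'' is false in general---and it would even prove a stronger bound than the corollary claims. The finiteness and the bound $2\kappa_-(\cL)$ come from a different mechanism: the Hermitian form $\langle\cA\,\cdot,\cdot\rangle_{\dot{H}^{\frac12}}=(\cL\,\cdot,\cdot)_{\dot{H}^{\frac12}}$ has exactly $\kappa_-(\cL)<\infty$ negative squares, root subspaces $\rE_z(\cA)$ and $\rE_w(\cA)$ are $\langle\cdot,\cdot\rangle_{\dot{H}^{\frac12}}$-orthogonal unless $w=z^*$, hence the span of the root subspaces over $z\in\C_+$ is neutral for $(\cL\,\cdot,\cdot)_{\dot{H}^{\frac12}}$ and, intersecting $\ker(\cL)$ trivially, has dimension at most $\kappa_-(\cL)$; the conjugate half-plane doubles this to $2\kappa_-(\cL)$. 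This Pontryagin-type count is also what underlies the estimate $\kappa_{\C_+}(\cA)\le\kappa_-(\cL)$ stated right after the corollary and the Hamiltonian--Krein index theorem \cite[Theorem 7.1.5]{kp13} invoked in Lemma \ref{lem:kappaA}. Replace your zero-counting step by this argument (or by a citation that covers it) and the proof is complete.
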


\subsection{The Hamiltonian--Krein index of the operator $\cA$} Let $\kappa_{\C_+}(\cA)$ be the total algebraic multiplicity of eigenvalues of $\cA$ lying in the open upper half-plane $\C_+$.  As an immediate corollary of Lemma \ref{lem:defin} we arrive at the following estimate
\be
\kappa_{\C_+}(\cA)  \le \kappa_-(\cL). 
\ee
If $\lambda\in \R\setminus\{0\}$ is an eigenvalue of $\cA$, then we shall denote by 
$\kappa_{\lambda}^-(\cA)$ its negative index. More precisely, if $\rE_\lambda(\cA)$ is the generalized eigenspace of $\cA$ corresponding to $\lambda$, 
 then $\kappa_\lambda^-(\cA)$ is the number of non-positive eigenvalues of the operator $\cL$ restricted to $\rE_\lambda(\cA)$.
It can be shown that  $\kappa_\lambda^-(\cA)$ is in fact equal to the number of negative eigenvalues of $\cL_\lambda := P_\lambda \cL P_\lambda$ if $\lambda$ is a {\em normal} eigenvalue of $\cA$, that is, if $\lambda$ is an isolated eigenvalue, $\dim(\rE_\lambda)<\infty$ and $\cA\upharpoonright \rE_\lambda^\perp$ is boundedly invertible (see \cite[Corollary VI.6.6]{bognar}). Here $P_\lambda$ denotes the corresponding orthogonal  projection in $\dot{H}^{\frac12}(\R)$ onto $\rE_\lambda(\cA)$. 
The total negative Krein index is then defined by
\be
\kappa_{\R}^-(\cA) := \sum_{\lambda\in \R\setminus\{0\}} \kappa_\lambda^-(\cA).
\ee
The number 
\be
\kappa_{\rm Ham}(\cA):= \kappa_{\C_+}(\cA) + \kappa_{\R}^-(\cA)
\ee
is called {\em the Hamiltonian--Krein index} of $\cA$. 

 If $\ker(\cL)\neq \{0\}$, then we need to introduce the constrained matrix $\bD$. Noting that $\ker(\cL)=\ker(H_V)$ is at most one-dimensional, we conclude that $\ker(\cL) = {\rm span}\{\psi_0\}$ with some $\psi_0\neq 0$. Now we set
\be\label{eq:bD}
\bD= (\cL^{-1}\cJ \psi_0,\cJ\psi_0)_{\dot{H}^{\frac12}},
\ee
and then we define the following quantity 
\be
\kappa_-(\bD) = \begin{cases} 0, & \bD > 0,\\ 1, & \bD< 0.  \end{cases}
\ee
Notice that \eqref{eq:bD} is well defined. Indeed, if $0\in\sigma(\cL)$, then $0$ is an isolated eigenvalue. Hence $\cJ\psi_0\in \ran(\cL)$ only if $\cJ\psi_0\perp \ker(\cL)$. However, 
\[
(\cJ\psi_0,\psi_0)_{\dot{H}^{\frac12}} = (\I|D|\cH\psi_0,\psi_0)_{L^2} = (D\psi_0,\psi_0)_{L^2}=-\I(\psi_0',\psi_0)_{L^2} = 0,
\]
since $\psi_0\in H^1(\R)$ is real valued (up to a scalar multiple). 

\begin{lemma}\label{lem:kappaA}
Assume that $V\in L^1_{\loc}(\R)$ is real valued and satisfies \eqref{eq:cond_int_limit_0}. 
\begin{itemize}
\item[{\rm (i)}] If $\ker(\cL)=\{0\}$, then $\kappa_{\rm Ham}(\cA) = \kappa_-(\cL)$. 
\item[{\rm (ii)}] If $\ker(H_V)\neq\{0\}$ and $\bD\neq 0$, then
\be\label{eq:kappaA1}
\kappa_{\rm Ham}(\cA) = \kappa_-(\cL) - \kappa_-(\bD).
\ee
\end{itemize}
\end{lemma}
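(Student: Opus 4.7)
The strategy is to apply the classical Pontryagin--Krein index formula \eqref{eq:instindex} directly to the pair $(\cL,\cJ)$ on the Hilbert space $\dot{H}^{\frac12}(\R)$. Indeed, the whole construction in Sections~\ref{sec:II} and \ref{sec:III} has been tailored precisely to bring the original indefinite spectral problem \eqref{eq:indSpec} into a form where this classical formula is applicable: by Lemmas~\ref{lem:relcomp_birman} and \ref{lem:bargmann}, $\cL$ is self-adjoint, lower semibounded, and has $\kappa_-(\cL)<\infty$; by Lemma~\ref{lem:3.01}, $\cJ=\I\cH$ is a bounded self-adjoint involution (in particular $\cJ^2=I$); and by Lemma~\ref{lem:defin}, $\cA=\cJ\cL$ is definitizable with $\rho(\cA)\neq\emptyset$. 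Under these hypotheses, the theorem going back to Pontryagin and Krein, as developed in, e.g., \cite[Chapter 7]{kp13} and \cite{kks1,kks2}, yields
\[
\kappa_{\rm Ham}(\cA) = \kappa_-(\cL) - \kappa_-(\bD),
\]
where $\bD$ is the constrained matrix \eqref{eq:constrained} computed with respect to an orthonormal basis of $\ker(\cL)$ in the inner product of $\dot{H}^{\frac12}(\R)$.

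For case (i), since $\ker(\cL)=\{0\}$ the constrained matrix is empty, $\kappa_-(\bD)=0$, and the formula reduces to $\kappa_{\rm Ham}(\cA)=\kappa_-(\cL)$ as claimed. For case (ii), $\ker(\cL)$ is one-dimensional and spanned by $\psi_0$, so the constrained matrix collapses to the $1\times 1$ scalar \eqref{eq:bD}; its well-definedness follows from the computation immediately preceding the lemma, where $(\cJ\psi_0,\psi_0)_{\dot{H}^{\frac12}}=-\I(\psi_0',\psi_0)_{L^2}=0$ ensures that $\cJ\psi_0\in\ran(\cL)$ (using that $0$ is an isolated eigenvalue, so $\ran(\cL)=\ker(\cL)^\perp$). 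Substituting into the general formula gives \eqref{eq:kappaA1}.

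The main obstacle is to verify that a version of the classical Pontryagin--Krein formula with hypotheses matching our precise setting is in fact available in the literature: namely, one covering an unbounded self-adjoint $\cL$ with unbounded inverse on the non-standard Hilbert space $\dot{H}^{\frac12}(\R)$, paired with a bounded symmetry $\cJ$. If a cited statement does not cover this verbatim, the argument can be completed by a finite-dimensional reduction. By Corollary~\ref{cor:3.3}, every nonreal eigenvalue of $\cA$ as well as every real eigenvalue of negative Krein signature lies inside a finite-dimensional $\cA$-invariant subspace, of total dimension at most $2\kappa_-(\cL)$. Restricting $\cL$ and $\cJ$ to an enlargement $\cM$ of this subspace that additionally contains $\ker(\cL)$, one obtains a finite-dimensional $J$-self-adjoint system to which the classical Frobenius--Krein inertia theorem applies, and a Schur-complement computation on the block decomposition of $\cL\upharpoonright\cM$ induced by $\ker(\cL)\oplus\ker(\cL)^\perp$ converts the kernel contribution into the correction $\kappa_-(\bD)$.
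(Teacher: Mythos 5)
Your proposal is correct and follows essentially the same route as the paper: the paper's proof is precisely an application of the Hamiltonian--Krein index theorem \cite[Theorem 7.1.5]{kp13} to the pair $(\cL,\cJ)$ in $\dot{H}^{\frac12}(\R)$, with the well-definedness of $\bD$ handled by the computation preceding the lemma, exactly as you argue. Your additional finite-dimensional reduction is an unnecessary (though harmless) hedge, since the cited theorem does cover a lower semibounded self-adjoint $\cL$ with $\kappa_-(\cL)<\infty$, $0$ at worst an isolated eigenvalue, and a bounded self-adjoint involution $\cJ$.
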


\begin{proof}
The proof immediately follows from the Hamiltonian--Krein index theorem, see \cite[Theorem 7.1.5]{kp13}.
\end{proof}

Notice that several estimates on $\kappa_-(\cL)$ in terms of $V$ are given in the previous section, see Lemma \ref{lem:bargmann} and Lemma \ref{lem:2.9}.

\subsection{Proof of Theorem \ref{th:main}}
Our aim is to show that the eigenvalues (counting their multiplicities) of \eqref{eq:indSpec} and $\cA$ coincide. Recall that $z\in \C$ is called an eigenvalue of \eqref{eq:indSpec} if there is  $u\in \dom(H_V)$ such that $H_V u= - i z  u'$. Since $\dom(H_V)\subset \dom(\gt_0) = H^1(\R)$, we conclude that $z$ is an eigenvalue of \eqref{eq:indSpec} if there is  $u\in \dom(H_V)$ such that $u\neq 0$ and $H_Vu=zDu$. 
On the other hand, since $\cJ=\cJ^{-1}$, $z\in\C$ is an eigenvalue of $\cA$ if there is $u\in \dom(\cL)$ such that $u\neq 0$ and  $\cL u = z \cJ u$. 

The generalized eigenspaces for $\cA$ are defined in a standard way:
\be\label{eq:genE_A}
\rE_z(\cA):= \bigcup_{n\in\N} \ker\big((\cA-z)^n\big).
\ee
The generalized eigenspaces for \eqref{eq:indSpec} can be formally written as
\be\label{eq:genE_A}
\rE_z(A):= \bigcup_{n\in\N} \ker\big((D^{-1}H_V-z)^n\big).
\ee
In order to define $\rE_z(A)$ properly, we set
\[
\ker(D^{-1}H_V-z):= \ker(H_V-zD) = \{u\in\dom(H_V)\colon H_Vu = zDu\},
\]
and then we define $\ker((D^{-1}H_V-z)^{n+1})$ for all $n\ge 1$ as follows
\be\label{eq:kerHn}
\ker\big((D^{-1}H_V-z)^{n+1}\big):=\big\{u\in\dom(H_V)\colon H_Vu-zDu \in D(\ker\big((D^{-1}H_V-z)^n)\big)\big\}.
\ee

\begin{lemma} \label{lem:gev}
Let $V \in L^1_\loc(\R)$ be a real-valued function satisfying \eqref{eq:cond_int_limit_0}. Then 
\begin{equation}\label{eq:gev1}
		\ker(H_V-zD)	= \ker (\cL-z\cJ)
\end{equation}
for every $z\in\C$. Moreover,  
	\begin{equation} \label{eq:gev}
		\ker \left((D^{-1}H_V -z)^n \right) = \ker \left(( \cA -z)^n \right),\quad n \in \N,
	\end{equation}
 and the corresponding generalized eigenspaces $\rE_z(A)$ and $\rE_z(\cA)$ coincide. 
\end{lemma}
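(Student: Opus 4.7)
The plan is to reduce both eigenspace equalities to a common variational characterization in terms of the quadratic form $\gt_V$. The bridge between the two Hilbert spaces is the elementary identity
\[
(\cJ u, v)_{\dot{H}^{\frac12}} = \int_\R |\lambda|\,\sgn(\lambda)\, \hat u(\lambda)\, \overline{\hat v(\lambda)}\, d\lambda = (Du, v)_{L^2},
\]
valid for $u, v \in H^1(\R)$ by Plancherel, together with the fact that $\cJ$ is a unitary involution on $\dot{H}^{\frac12}(\R)$.

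For the base case \eqref{eq:gev1}, I would appeal to the first representation theorem of Kato twice. By construction in Section~\ref{sec:II}, the form $\gt_V$ with domain $H^1(\R)$ represents $\cL$ on $\dot{H}^{\frac12}(\R)$, while the very same form (with the same domain) represents $H_V$ on $L^2(\R)$. Consequently, $u \in \ker(H_V - zD)$ iff $u \in H^1(\R)$ and $\gt_V[u,v] = z(Du,v)_{L^2}$ for every $v \in H^1(\R)$, whereas $u \in \ker(\cL - z\cJ)$ iff $u \in H^1(\R)$ and $\gt_V[u,v] = z(\cJ u,v)_{\dot{H}^{\frac12}}$ for every $v \in H^1(\R)$. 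The bridging identity shows these two variational conditions coincide, which proves \eqref{eq:gev1}; setting $z = 0$ yields the byproduct $\ker(\cL) = \ker(H_V)$ (Corollary~\ref{cor:ker=ker}).

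For $n \geq 2$ in \eqref{eq:gev}, I would proceed by induction on $n$. In the forward direction, let $u \in \ker((D^{-1}H_V - z)^{n+1})$; by \eqref{eq:kerHn}, $u \in \dom(H_V)$ and $H_V u - zDu = Dw$ for some $w \in \ker((D^{-1}H_V - z)^n)$. The inductive hypothesis places $w$ in $\ker((\cA - z)^n) \subset \dom(\cL) \subset H^1(\R)$, so $Dw \in L^2(\R)$. Testing against $v \in H^1(\R)$ and using the bridging identity gives
\[
\gt_V[u,v] = z(Du,v)_{L^2} + (Dw,v)_{L^2} = z(\cJ u,v)_{\dot{H}^{\frac12}} + (\cJ w,v)_{\dot{H}^{\frac12}},
\]
and the representation theorem for $\cL$ on $\dot{H}^{\frac12}(\R)$ then forces $u \in \dom(\cL)$ with $\cL u = z\cJ u + \cJ w$. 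Applying the involution $\cJ$, this becomes $(\cA - z)u = w \in \ker((\cA - z)^n)$, hence $u \in \ker((\cA - z)^{n+1})$. The converse is symmetric: rewrite the $\cL$-equation variationally and apply the representation theorem for $H_V$ on $L^2(\R)$. Taking unions over $n$ then gives $\rE_z(A) = \rE_z(\cA)$.

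The hard part will be to justify the formal ``multiplication by $|D|$'' that converts $\cL u = z\cJ u + \cJ w$ into $H_V u = zDu + Dw$ at the level of domains in the two different Hilbert spaces. The variational reformulation circumvents this: both $\cL$ and $H_V$ are recovered from the same form $\gt_V$ on the same form domain $H^1(\R)$, so the only bookkeeping needed is that $Du, Dw \in L^2(\R)$ (automatic once $u, w \in H^1(\R)$) and that $\cJ u, \cJ w \in \dot{H}^{\frac12}(\R)$ (automatic since $\cJ$ is unitary on $\dot{H}^{\frac12}(\R)$). The secondary check $\dom(\cA^n) \subset H^1(\R)$ follows by induction from $\dom(\cA) = \dom(\cL) \subset H^1(\R)$, established in Section~\ref{sec:II.2}.
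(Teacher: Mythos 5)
Your proposal is correct and follows essentially the same route as the paper: both proofs rest on the first representation theorem applied to the single form $\gt_V$ with domain $H^1(\R)$, viewed in $L^2(\R)$ for $H_V$ and in $\dot{H}^{\frac12}(\R)$ for $\cL$, combined with the identity $(\cJ u,v)_{\dot{H}^{\frac12}}=(Du,v)_{L^2}$ for $u,v\in H^1(\R)$, and an induction on $n$ via \eqref{eq:kerHn}. Your purely variational phrasing (avoiding the explicit applications of $|D|^{\pm1}$ used in the paper) and your written-out induction step are only stylistic refinements of the same argument.
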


\begin{proof}
Let us first prove \eqref{eq:gev1}.  Since $H_V$ and $\cL$ are the self-adjoint operators associated with the quadratic form $\gt_V$ on $L^2(\R)$ resp. $\dot{H}^{\frac12}(\R)$, the first representation theorem (see \cite[Theorem VI.2.1]{kato}) yields
	\begin{align} \label{eq:repH}
		\dom(H_V) &= \{u \in H^1(\R) \colon  \exists w \in L^2(\R) \ \text{such that}\ \gt_V[u,v] = (w, v)_{L^2} \; \forall v \in H^1(\R) \},
	\end{align}
with $H_Vu = w$, and
\begin{align} \label{eq:repL}
		\dom(\cL) &= \{{u} \in H^1(\R) \colon  \exists \tilde w \in \dot{H}^{\frac12}(\R) \ \text{such that}\ \gt_V[ u,  v] = (\tilde w,  v)_{\dot{H}^{\frac12}} \; \forall v \in H^1(\R) \},
\end{align}
with $\cL  u = \tilde w$. 

If $u \in \ker(H_V -zD)$, then $u\in\dom(H_V)$ and $H_Vu=zDu$. Hence $D u = w\in L^2(\R)$ and moreover applying $|D|^{-1}$ to both sides implies $|D|^{-1} H_V u = \I z\cH u$, where $\cH$ is the Hilbert transform. This in particular implies that $|D|^{-1} H_V u \in H^1(\R)$ 
and 
\begin{equation*}
	\gt_V[u,v] = (H_Vu, v)_{L^2} = (|D|^{-1} H_Vu, v)_{ \dot{H}^{\frac12}} 
\end{equation*}
for every $v \in H^1(\R)$. Hence $u \in \dom(\cL)$ and $\cL u = |D|^{-1} H_Vu = \I z\cH u = z\cJ u$, which shows that $\ker(H_V-zD)	\subseteq \ker (\cL-z\cJ)$. 
	
Similarly, if $u\in \ker(\cA -z)$, then $u \in \dom(\cL)$ and $\cL u =z\cJ u $. Since $\dom(\cL)\subset H^1(\R)$ and $\cH$ acts as isometry on $H^1(\R)$, we get $\cL u\in H^1(\R)$.  Moreover, 
\[
\gt_V[u,v] = (\cL u, v)_{\dot{H}^{\frac12}} = (|D| \cL u , v)_{L^2}
\]
holds for every $v\in H^1(\R)$. This implies that $u\in \dom(H_V)$ and $H_Vu = |D| \cL u$. It remains to note that  
\[
\gt_V[u,v] = (\cL u, v)_{\dot{H}^{\frac12}} =z(\cJ u,v)_{\dot{H}^{\frac12}}= \I z(|D|\cH  u , v)_{L^2} = z(Du,v)_{L^2(\R)}.
\]
Therefore, $\ker (\cL-z\cJ)	\subseteq \ker(H_V-zD)$, which proves \eqref{eq:gev1}.

Clearly,  \eqref{eq:gev} with $n=1$ is equivalent to \eqref{eq:gev1}. The claim for $n\geq 2$ can easily be proven by using the same steps and an induction argument.
\end{proof}

\begin{corollary}\label{cor:ker=ker}
$\rE_0(A) = \rE_0(\cA)$. In particular,  $\ker(H_V) = \ker(\cL )$.
\end{corollary}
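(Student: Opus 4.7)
The corollary is an essentially immediate consequence of Lemma~\ref{lem:gev}, since the latter establishes the equality of kernels of \emph{all} powers at \emph{every} $z\in\C$. My plan is simply to specialize $z=0$ and unpack the definitions.

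First I would observe that, by definition,
\[
\ker(D^{-1}H_V - 0) = \ker(H_V - 0\cdot D) = \ker(H_V),
\]
and similarly $\ker(\cA - 0) = \ker(\cL - 0\cdot\cJ) = \ker(\cL)$, since $\cJ$ is a bijection on $\dot{H}^{\frac12}(\R)$ (recall $\cJ=\cJ^{-1}$). Thus the case $n=1$, $z=0$ of Lemma~\ref{lem:gev} (equivalently, relation \eqref{eq:gev1}) yields $\ker(H_V)=\ker(\cL)$, which is the second assertion.

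For the first assertion I would apply Lemma~\ref{lem:gev} with $z=0$ and arbitrary $n\in\N$, giving
\[
\ker\bigl((D^{-1}H_V)^n\bigr) = \ker(\cA^n)
\]
for each $n$. Taking the union over $n\in\N$ and using the definitions of the generalized eigenspaces at $0$,
\[
\rE_0(A) = \bigcup_{n\in\N}\ker\bigl((D^{-1}H_V)^n\bigr) = \bigcup_{n\in\N}\ker(\cA^n) = \rE_0(\cA),
\]
which completes the proof.

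There is essentially no obstacle here, as all the real work has been absorbed into Lemma~\ref{lem:gev}. The only small point worth emphasizing in the write-up is that the definition \eqref{eq:kerHn} of $\ker((D^{-1}H_V-z)^{n+1})$ at $z=0$ reduces to the iterated condition $H_V u \in D(\ker((D^{-1}H_V)^n))$, so that the union defining $\rE_0(A)$ is well-posed and matches the standard generalized kernel on the $\cA$-side through the identification provided by \eqref{eq:gev}.
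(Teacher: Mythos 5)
Your proposal is correct and matches the paper's treatment: the corollary is stated there as an immediate consequence of Lemma~\ref{lem:gev} (no separate proof is given), and specializing \eqref{eq:gev1} and \eqref{eq:gev} to $z=0$ and taking the union over $n$ is exactly the intended argument.
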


\begin{corollary} \label{lem:indexinv}
	Let $V \in L_\loc^1(\R)$ be a real-valued function satisfying \eqref{eq:cond_int_limit_0}. Then
	\begin{equation}
			(u, H_V v)_{L^2} = (u,  \cL v)_{\dot{H}^{\frac12}}
	\end{equation}
 for every $z \in \C$ and  for all $u$, $v \in \rE_z(A)=\rE_z(\cA)$.
\end{corollary}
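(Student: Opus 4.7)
The plan is to deduce the corollary directly from Lemma \ref{lem:gev} together with the observation already exploited in its proof: $H_V$ and $\cL$ are the self-adjoint operators associated, via the first representation theorem, with one and the same sesquilinear form $\gt_V$ on $H^1(\R)$, simply viewed as a form in the two different Hilbert spaces $L^2(\R)$ and $\dot{H}^{\frac12}(\R)$.

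First I would check the relevant regularity of the generalized eigenvectors. Lemma \ref{lem:gev} gives $\rE_z(A) = \rE_z(\cA)$. The defining recursion \eqref{eq:kerHn} places every $\ker((D^{-1}H_V - z)^n)$ into $\dom(H_V)$, while $\ker((\cA - z)^n)\subseteq\dom(\cA) = \dom(\cL)$. Since $\dom(H_V),\dom(\cL)\subseteq H^1(\R)$, any $u,v \in \rE_z(\cA)=\rE_z(A)$ belong to $\dom(H_V) \cap \dom(\cL) \subseteq H^1(\R)$.

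Next I would invoke the two representation identities already isolated in the proof of Lemma \ref{lem:gev}, see \eqref{eq:repH}--\eqref{eq:repL}: for every $w \in \dom(H_V)$ and every test vector $\varphi \in H^1(\R)$ one has $\gt_V[w,\varphi] = (H_V w, \varphi)_{L^2}$, while for every $w \in \dom(\cL)$ and $\varphi \in H^1(\R)$ one has $\gt_V[w,\varphi] = (\cL w, \varphi)_{\dot{H}^{\frac12}}$. Taking $w = v$, $\varphi = u$ and combining the two yields $(H_V v, u)_{L^2} = \gt_V[v,u] = (\cL v, u)_{\dot{H}^{\frac12}}$. Finally, because $V$ is real valued the form $\gt_V$ is Hermitian, so complex conjugation transforms this chain into
\[
(u, H_V v)_{L^2} = \overline{\gt_V[v,u]} = \gt_V[u,v] = (u, \cL v)_{\dot{H}^{\frac12}},
\]
which is the desired identity.

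I do not expect any substantial obstacle: all the nontrivial work (the coincidence of the two generalized eigenspaces together with the compatibility between $H_V$ and $\cL$ through the common form $\gt_V$) has already been carried out in Lemma \ref{lem:gev}. The only thing to double-check is that $\rE_z(\cA)$ actually lies inside the intersection of the two operator domains, which follows once one recalls that both $\dom(H_V)$ and $\dom(\cL)$ are contained in the common form domain $H^1(\R)$.
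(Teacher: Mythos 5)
Your proposal is correct and is essentially the paper's own argument: elements of $\rE_z(A)=\rE_z(\cA)$ lie in $\dom(H_V)\cap\dom(\cL)\subset H^1(\R)$, and then the representation identities \eqref{eq:repH}--\eqref{eq:repL} give $(u,H_Vv)_{L^2}=\gt_V[u,v]=(u,\cL v)_{\dot{H}^{\frac12}}$. The extra conjugation step using that $\gt_V$ is Hermitian is a harmless elaboration of the same one-line computation.
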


\begin{proof}
Let $u, v \in \rE_z(A)=\rE_z(\cA)$. Then $u,v\in \dom(\cL)\cap\dom(H_V)$ and hence 
	\[
\gt_V[u,v]=(u, \cL v)_{\dot H ^{1/2}} = (u, H_V v)_{L^2}.\qedhere
	\]
\end{proof}

Since  $\cL$ and $H_V$ are closely connected, we end up with the following formula.

\begin{corollary}\label{cor:kappaA}
Assume that $V\in L^1_{\loc}(\R)$ is real valued and satisfies \eqref{eq:cond_int_limit_0}. 
\begin{itemize}
\item[{\rm (i)}] If $\ker(H_V)=\{0\}$, then $\kappa_{\rm Ham}(\cA) = \kappa_-(H_V)$. 
\item[{\rm (ii)}] If $\ker(H_V)\neq\{0\}$ and $\bD\neq 0$, then
\be\label{eq:kappaA2}
\kappa_{\rm Ham}(\cA) = \kappa_-(H_V) - \kappa_-(\bD_V),
\ee
where $\bD_V$ is defined by \eqref{eq:bD_V}.
\end{itemize}
\end{corollary}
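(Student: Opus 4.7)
\medskip

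\noindent\textbf{Plan of proof.} The corollary is essentially a translation of Lemma \ref{lem:kappaA} from the ``Krein space data'' $(\cL, \bD)$ to the ``Schrödinger data'' $(H_V, \bD_V)$. The plan therefore consists of two cheap inputs and one actual computation.

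First, by Lemma \ref{lem:bargmann}, $\kappa_-(\cL) = \kappa_-(H_V)$, and by Corollary \ref{cor:ker=ker}, $\ker(\cL) = \ker(H_V)$. Hence the dichotomy ``$\ker(H_V)=\{0\}$'' vs. ``$\ker(H_V)={\rm span}\{\psi_0\}$'' is identical to the dichotomy governing Lemma \ref{lem:kappaA}, and in the trivial kernel case the statement of (i) follows by direct substitution.

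The substance is to verify the identity $\bD = \bD_V$ in case (ii). Assume $\ker(H_V) = \ker(\cL) = \mathrm{span}\{\psi_0\}$ with $\psi_0$ real valued (up to a scalar). Let $u\in\dom(\cL)$ solve $\cL u = \cJ\psi_0$; such $u$ exists since the paper already verified $\cJ\psi_0\perp\ker(\cL)$ in $\dot{H}^{\frac12}(\R)$, and is unique modulo $\mathrm{span}\{\psi_0\}$. Computing in the Fourier picture,
\begin{equation*}
(\cJ\psi_0, v)_{\dot{H}^{\frac12}} = \int_{\R}\lambda\,\hat\psi_0(\lambda)\overline{\hat v(\lambda)}\,d\lambda = (D\psi_0, v)_{L^2} = -\I(\psi_0', v)_{L^2},
\end{equation*}
for every $v \in H^1(\R)$. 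Combining this with $\gt_V[u,v] = (\cL u, v)_{\dot{H}^{\frac12}} = (\cJ\psi_0, v)_{\dot{H}^{\frac12}}$ and applying the first representation theorem \eqref{eq:repH} for $H_V$, one obtains $u\in\dom(H_V)$ and $H_V u = -\I\psi_0'$. Since $\psi_0\in H^1(\R)$ is real valued, $(\psi_0',\psi_0)_{L^2}=\frac12\int(\psi_0^2)'\,dx = 0$, so $\psi_0'\in\ker(H_V)^{\perp}=\ran(H_V)$ and we may write $u = -\I H_V^{-1}\psi_0'$ (modulo $\ker H_V$).

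It remains to evaluate $\bD = (u,\cJ\psi_0)_{\dot{H}^{\frac12}}$. Repeating the same Fourier computation for $(u,\cJ\psi_0)_{\dot{H}^{\frac12}}$ yields $\bD = (Du,\psi_0)_{L^2} = -\I(u',\psi_0)_{L^2}$, and integration by parts (legal since $u,\psi_0\in H^1(\R)$) transforms this into $\bD = \I(u,\psi_0')_{L^2}$. Substituting $u = -\I H_V^{-1}\psi_0'$ gives
\begin{equation*}
\bD = \I\,(-\I H_V^{-1}\psi_0',\,\psi_0')_{L^2} = (H_V^{-1}\psi_0',\psi_0')_{L^2} = \bD_V.
\end{equation*}
The ambiguity in the choice of $u$ does not affect the value of $\bD$, since $(\psi_0,\cJ\psi_0)_{\dot{H}^{\frac12}}=0$ (as noted after \eqref{eq:bD}) and $(\psi_0,\psi_0')_{L^2}=0$. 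In particular $\kappa_-(\bD)=\kappa_-(\bD_V)$, and plugging into formula \eqref{eq:kappaA1} yields \eqref{eq:kappaA2}.

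The only step that is not entirely routine is the identification $\bD = \bD_V$, and even that is straightforward once one recognizes that $|D|\cJ = D$ on Fourier side, so the inner products $(\,\cdot\,,\cJ\psi_0)_{\dot{H}^{\frac12}}$ and $(\,\cdot\,,\psi_0)_{L^2}$ differ precisely by a differentiation---which is exactly the gap between $\bD$ and $\bD_V$. No genuine obstacle is expected.
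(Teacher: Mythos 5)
Your proposal is correct and follows essentially the same route as the paper: reduce everything to the identity $\bD=\bD_V$ via $\kappa_-(\cL)=\kappa_-(H_V)$ and Lemma \ref{lem:kappaA}, and establish that identity with the first representation theorem linking $\cL$ and $H_V$ through the common form $\gt_V$. The only (immaterial) difference is the direction of travel: you solve $\cL u=\cJ\psi_0$ and deduce $H_Vu=-\I\psi_0'$, whereas the paper starts from $u=H_V^{-1}\psi_0'$ and deduces $\cL u=\I\cJ\psi_0$.
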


\begin{proof}
In view of \eqref{eq:kappa-}, it suffices to show that $\bD_V=\bD$. To justify the definition of $\bD_V$, first notice that $\psi_0\in \ker(H_V)$ can be chosen real valued. Next observe that  $\psi_0'\in \ran(H_V)$ if $\psi_0'\perp \ker(H_V)$. However, 
$(\psi_0',\psi_0)_{L^2} = 0$ since $\psi_0\in H^1(\R)$.

Next notice that if $u \in \dom(H_V) $ and $H_V u \in \dom(|D|^{-1})$, then $u \in \dom(\cL)$ and $\cL u = |D|^{-1} H_V u$. Indeed, from \eqref{eq:repH} we get
\begin{equation*}
		\gt_V[u,v] = (H_V u, v)_{L^2} = (|D|^{-1} H_V u, v)_{\dot{H}^{\frac 1 2}}
\end{equation*}
for $v \in H^1(\R)$. Using \eqref{eq:repL} the claim is proven.

Now set $u := H_V^{-1} \psi_0' \in \dom(H_V)$. Then $H_V u = \psi_0' \in \dom(|D|^{-1})$. Thus $u \in \dom(\cL)$ and $\cL u = |D|^{-1} H_V u = |D|^{-1} \psi_0' = \I \cJ \psi_0$. Hence $u- \I \cL^{-1} \cJ \psi_0 \in \ker(\cL) = \ker(H_V)$. Because $\psi_0' \perp \ker(H_V)$ in $L^2(\R)$,
\begin{equation*}
	\bD_V = (H_V^{-1} \psi_0', \psi_0')_{L^2} = \I (\cL^{-1} \cJ \psi_0, \psi_0')_{L^2} = (\cL^{-1} \cJ \psi_0, \cJ \psi_0 )_{\dot{H}^{\frac 1 2}} = \bD.	\qedhere
\end{equation*}
\end{proof}

The spectral instability definition \ref{def:specInst} refers to the operator $H_V$. More precisely, let $\kappa_{\C_+}(A)$ be the total algebraic multiplicity of eigenvalues of \eqref{eq:indSpec} lying in the open upper half-plane $\C_+$.  If $\lambda\in \R\setminus\{0\}$ is an eigenvalue of \eqref{eq:indSpec}, then we shall denote by $\kappa_{\lambda}^-(A)$ its negative index, i.e., the number of non-positive eigenvalues of $P_\lambda H_V P_\lambda$, where $P_\lambda$ denotes the orthogonal projection in $L^2(\R)$ onto $E_\lambda(A)$. The total negative Krein index is then defined by
\be
\kappa_{\R}^-(A) = \sum_{\lambda\in \R\setminus\{0\}} \kappa_\lambda^-(A).
\ee
Notice that the spectral problem \eqref{eq:indSpec} has an additional symmetry. Namely, its point spectrum is symmetric with respect to the imaginary axis since $V$ is real-valued (indeed, if $z\in \C$ is the eigenvalue of \eqref{eq:indSpec} and $\psi_z$ is the corresponding eigenfunction, then $-z^\ast$ is also an eigenvalue and the corresponding eigenfunction is simply $\psi_z^\ast$). Then we can split unstable eigenvalues in three groups and write 
\[
\kappa_{\C_+}(A) = \kappa_{\I\R_{>0}}(A) + 2\kappa_{\rm I}(A),
\]
where $\kappa_{\I\R_{>0}}(A)$ is the total algebraic multiplicity of purely imaginary eigenvalues with positive imaginary part and $\kappa_{I}(A)$ is the total algebraic multiplicity of eigenvalues lying in the first open quadrant. Moreover,
\[
\kappa_{\R}^-(A) = 2 \kappa_{\R_{>0}}^-(A) :=2 \sum_{\lambda>0} \kappa_\lambda^-(A).
\]
The number 
\be
\kappa_{\rm Ham}:= \kappa_{\I\R_{>0}}(A) + 2\kappa_{\rm I}(A) + 2\kappa_{\R_{>0}}^-(A)
\ee
is called {\em the Hamiltonian--Krein index} of \eqref{eq:indSpec}.

Combining Lemma \ref{lem:gev} with Corollary \ref{lem:indexinv}, we get the next result, which completes the proof of Theorem \ref{th:main}.

\begin{corollary}
Let $V \in L_\loc^1(\R)$ be a real-valued function satisfying \eqref{eq:cond_int_limit_0}. Then
\be
\kappa_{\C_+}(A) = \kappa_{\C_+}(\cA),\qquad \kappa_{\R}^-(A) = \kappa_{\R}^-(\cA).
\ee
In particular, the Hamiltonian--Krein index $\kappa_{\rm Ham}(A)$ of \eqref{eq:indSpec} coincides with $\kappa_{\rm Ham}(\cA)$.
\end{corollary}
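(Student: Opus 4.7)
The plan is to leverage the two preparatory results already established, namely Lemma \ref{lem:gev} (equality of kernels of all powers, and hence of generalized eigenspaces) and Corollary \ref{lem:indexinv} (equality of the two bilinear forms on these generalized eigenspaces), so that both ingredients entering the Hamiltonian--Krein index transfer from $\cA$ to $A$ with no further analytic input.

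First I would handle $\kappa_{\C_+}$. By Lemma \ref{lem:gev}, for every $z\in\C_+$ and every $n\in\N$ the spaces $\ker((D^{-1}H_V-z)^n)$ and $\ker((\cA-z)^n)$ coincide, and consequently $\rE_z(A)=\rE_z(\cA)$. Taking dimensions gives that each eigenvalue $z\in\C_+$ of \eqref{eq:indSpec} has the same algebraic multiplicity when regarded as an eigenvalue of $\cA$, and conversely; summing over $z\in\C_+$ yields $\kappa_{\C_+}(A)=\kappa_{\C_+}(\cA)$.

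Next I would treat the real negative indices. Fix a real eigenvalue $\lambda\neq 0$. By Lemma \ref{lem:gev} again, $\rE_\lambda(A)=\rE_\lambda(\cA)$; call this common finite-dimensional space $\rE_\lambda$. By Corollary \ref{lem:indexinv}, the two sesquilinear forms
\[
(u,v)\mapsto (u,H_V v)_{L^2},\qquad (u,v)\mapsto (u,\cL v)_{\dot{H}^{\frac12}}
\]
agree on $\rE_\lambda\times\rE_\lambda$. Hence their matrix representations with respect to any basis of $\rE_\lambda$ coincide, and in particular the restrictions $P_\lambda H_V P_\lambda$ and $P_\lambda \cL P_\lambda$ (where on each side $P_\lambda$ is the orthogonal projection in the respective Hilbert space onto $\rE_\lambda$) have the same signature. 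Therefore $\kappa_\lambda^-(A)=\kappa_\lambda^-(\cA)$, and summing over the (finitely many, in view of Lemma \ref{lem:defin} and Corollary \ref{cor:3.3}) real nonzero eigenvalues gives $\kappa_{\R}^-(A)=\kappa_{\R}^-(\cA)$.

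Adding the two identities produces $\kappa_{\rm Ham}(A)=\kappa_{\rm Ham}(\cA)$, and then Corollary \ref{cor:kappaA}, together with the Bargmann-type bound of Lemma \ref{lem:bargmann} and the observation after \eqref{eq:bD} that $\bD_V$ is well defined, completes the proof of Theorem \ref{th:main}. The only genuinely delicate point is the identification of the restricted quadratic forms used to define $\kappa_\lambda^-$; once Corollary \ref{lem:indexinv} is in hand this reduces to a linear-algebraic signature comparison, so I do not expect any serious obstacle at this last step.
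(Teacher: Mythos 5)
Your proposal is correct and follows exactly the paper's own route: the paper likewise obtains the corollary by combining Lemma \ref{lem:gev} (equality of generalized eigenspaces, hence of algebraic multiplicities in $\C_+$) with Corollary \ref{lem:indexinv} (equality of the quadratic forms of $H_V$ and $\cL$ on $\rE_\lambda$, so the negative counts of the compressions agree by inertia, independently of the ambient inner product). No gap to report.
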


Hence Corollary \ref{cor:kappaA} implies that
\be\label{eq:indSS}
\kappa_{\I\R_{>0}}(A) + 2\kappa_{\rm I}(A) + 2 \kappa_{\R_{>0}}^-(A) = \kappa_-(H_V)-\kappa_-(\bD).
\ee
Taking \eqref{eq:indSS} into account, we recover and slightly improve the results from \cite{ss}.

\begin{corollary}\label{cor:3.10}
	Let $V \in L_\loc^1(\R)$ be a real-valued function satisfying \eqref{eq:cond_int_limit_0}. If $\dim\ker(H_V)  =1$ and $\kappa_-(H_V)=1$, then 
	\begin{itemize}
	\item[{\rm (i)}] \eqref{eq:indSpec} is spectrally stable if $(H_V^{-1} \psi_0', \psi_0')_{L^2}<0$, where $\ker(H_V) = {\rm span}\{\psi_0\}$.
	\item[{\rm (ii)}] \eqref{eq:indSpec}  is spectrally unstable if  $(H_V^{-1} \psi_0', \psi_0')_{L^2}>0$ and in this case 
	\[
	\kappa_{\rm Ham} = \kappa_{\I\R_{>0}}(A)=1.
	\]
	\item[{\rm (iii)}] If $(H_V^{-1} \psi_0', \psi_0')_{L^2}=0$, then $\kappa_{\C_+}(A)=\kappa_{\R}^-(A)=0$ and $\dim \rE_0(A)\ge 3$.
	\end{itemize}
\end{corollary}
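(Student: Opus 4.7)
Parts (i) and (ii) are immediate consequences of identity~\eqref{eq:indSS}. Under the hypothesis $\kappa_-(H_V)=1$, this reduces to $\kappa_{\I\R_{>0}}(A)+2\kappa_{\rm I}(A)+2\kappa_{\R_{>0}}^-(A)=1-\kappa_-(\bD_V)$. In case (i), $\bD_V<0$ yields $\kappa_-(\bD_V)=1$ and the right-hand side vanishes, forcing all three non-negative integer summands on the left to be zero; spectral stability follows. In case (ii), $\bD_V>0$ gives $\kappa_-(\bD_V)=0$ and the right-hand side equals $1$; since $2\kappa_{\rm I}(A)$ and $2\kappa_{\R_{>0}}^-(A)$ are even, the only possibility is $\kappa_{\I\R_{>0}}(A)=1$ and $\kappa_{\rm I}(A)=\kappa_{\R_{>0}}^-(A)=0$, yielding a single purely imaginary unstable eigenvalue with positive imaginary part.

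For part (iii), identity~\eqref{eq:indSS} is no longer applicable, so I would split the proof in two. To establish $\dim\rE_0(A)\ge 3$, I extend the Jordan chain at $z=0$. Since $\psi_0\in H^1(\R)$ is real-valued, $(\psi_0',\psi_0)_{L^2}=0$ by integration by parts, so $\chi_0:=H_V^{-1}\psi_0'$ is well-defined and a direct computation using $D=-\I d/dx$ gives $D^{-1}H_V\chi_0=\I\psi_0$, placing $\chi_0\in\ker((D^{-1}H_V)^2)\setminus\ker(D^{-1}H_V)$. For the third member, I solve $H_Vv_2=-\I\chi_0'$, whose solvability requires $(\chi_0',\psi_0)_{L^2}=0$; another integration by parts gives $(\chi_0',\psi_0)_{L^2}=-(\chi_0,\psi_0')_{L^2}=-(H_V^{-1}\psi_0',\psi_0')_{L^2}=-\bD_V=0$ by the hypothesis of (iii). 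Since $\chi_0'$ is not a scalar multiple of $\psi_0'$ (otherwise $\psi_0$ would be a constant in $H^1(\R)$, hence zero), the element $v_2:=-\I H_V^{-1}\chi_0'$ lies in $\ker((D^{-1}H_V)^3)\setminus\ker((D^{-1}H_V)^2)$, and therefore $\dim\rE_0(A)\ge 3$.

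The vanishing $\kappa_{\C_+}(A)=\kappa_\R^-(A)=0$ I would obtain through Krein-space signature bookkeeping on $\cA$, transferred from $A$ via Lemma~\ref{lem:gev} and Corollary~\ref{lem:indexinv}. Constructing the analogous chain $\tilde v_0=\psi_0$, $\tilde v_1=\cL^{-1}\cJ\psi_0$, $\tilde v_2=\cL^{-1}\cJ\tilde v_1$ in $\rE_0(\cA)$ and exploiting $\cL\tilde v_i=\cJ\tilde v_{i-1}$ for $i\ge 1$, the $3\times 3$ Gram matrix of $(u,v)\mapsto(\cL u,v)_{\dot{H}^{\frac12}}$ has vanishing first row and column (since $\cL\tilde v_0=0$), vanishing diagonal entry $(\cL\tilde v_1,\tilde v_1)_{\dot{H}^{\frac12}}=\bD=0$, and off-diagonal $(\cL\tilde v_1,\tilde v_2)_{\dot{H}^{\frac12}}=\alpha:=(\cJ\tilde v_1,\tilde v_1)_{\dot{H}^{\frac12}}\in\R$; when $\alpha\ne 0$ this produces signature $(1,1,1)$, contributing exactly one negative direction, and when $\alpha=0$ the chain lengthens and a similar Hankel-type calculation persists. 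Since $\cA$ is definitizable (Lemma~\ref{lem:defin}) with essential spectrum sitting in the positive-definite part of $\cL$, every nonreal eigenvalue pair of $\cA$ would yield a rank-$2$ invariant subspace of $\cL$-signature $(1,1)$, and every nonzero real eigenvalue of non-positive Krein sign would consume a further negative direction; the global budget $\kappa_-(\cL)=\kappa_-(H_V)=1$, already exhausted by the Jordan block at $0$, forbids any such additional contribution, and combined with $\kappa_{\C_+}(A)=\kappa_{\C_+}(\cA)$, $\kappa_\R^-(A)=\kappa_\R^-(\cA)$ this finishes the claim. The main obstacle is the signature bookkeeping for longer chains in the degenerate sub-case $\alpha=0$: one must verify that the resulting Hankel-type Gram matrix still carries at least one negative direction compatible with the global constraint $\kappa_-(\cL)=1$, which is classical in Pontryagin--Krein theory but requires a careful case-by-case verification.
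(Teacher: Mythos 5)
Your handling of (i) and (ii) via \eqref{eq:indSS}, and your Jordan-chain construction for the first half of (iii), essentially coincide with the paper's proof: the paper likewise uses $\psi_0'\perp\ker(H_V)$ and $\bD_V=0$ (via integration by parts) to build $\psi_1=H_V^{-1}\psi_0'$ and $\psi_2=H_V^{-1}\psi_1'$ in $\rE_0(A)$, giving $\dim\rE_0(A)\ge 3$. The genuine gap is in the remaining claim of (iii), $\kappa_{\C_+}(A)=\kappa_{\R}^-(A)=0$. Since $\bD=0$, the Hamiltonian--Krein index theorem (Lemma \ref{lem:kappaA}) is unavailable, and your substitute --- direct signature bookkeeping for the form $(\cL\,\cdot,\cdot)_{\dot{H}^{\frac12}}$ on the chain at $0$ plus a ``budget'' argument with $\kappa_-(\cL)=1$ --- is left incomplete at exactly the decisive points. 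You yourself defer the degenerate sub-case $\alpha=(\cJ\tilde v_1,\tilde v_1)_{\dot{H}^{\frac12}}=0$; moreover, even for $\alpha\neq 0$ the budget argument needs two further verifications: (a) that negative squares coming from root subspaces at distinct eigenvalues (with $\lambda$ paired to $\lambda^\ast$) add up, and (b) that a nonzero real eigenvalue with $\kappa_\lambda^-(A)\ge 1$ really consumes a \emph{negative} direction --- recall that $\kappa_\lambda^-$ counts \emph{non-positive} eigenvalues of the restriction, and a merely neutral direction does not by itself contradict $\kappa_-(\cL)=1$ (a form with one negative square and one-dimensional kernel admits two-dimensional neutral subspaces, which is exactly what the chain $\{\tilde v_0,\tilde v_1\}$ already provides). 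Re-deriving this part of Pontryagin--Krein counting ``case by case,'' as you propose, is precisely the work that remains undone.

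The paper closes (iii) by a different and much shorter route, which you should adopt: by Corollary \ref{cor:3.3}, $\nu(0)\ge 3$ forces $k(0)\ge 2$ for any definitizing polynomial; since $p$ can be chosen of the form $p(z)=z\,p_0(z)(p_0(z^\ast))^\ast$ with $\deg(p_0)\le\kappa_-(\cL)=1$, this forces $p_0(z)=cz$, i.e.\ $p(z)=|c|^2z^3$. Hence $p$ has no nonreal zeros and no zeros in $\R\setminus\{0\}$, so the nonreal spectrum of $\cA$ is empty and every real nonzero eigenvalue is of positive type, giving $\kappa_{\C_+}(\cA)=\kappa_{\R}^-(\cA)=0$; transferring to $A$ via Lemma \ref{lem:gev} and Corollary \ref{lem:indexinv} finishes the proof. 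Until you either carry out the full signature bookkeeping (including the $\alpha=0$ and neutral-direction cases) or switch to this definitizing-polynomial argument, the second half of (iii) is not established.
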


\begin{proof}
	Items (i) and (ii) follow from \eqref{eq:indSS} and it remains to prove (iii). In the proof of Corollary \ref{cor:kappaA} we showed that $\ker(H_V)^\perp = \operatorname{ran}( H_V)$ and $\psi_0' \perp \ker(H_V)$. Hence $\psi_1 := H_V^{-1} \psi_0'$ is well-defined and, moreover,  by \eqref{eq:kerHn}, $\psi_1\in E_0(A)$. Therefore $\dim(E_0(A)) \geq 2$ regardless of the value of $(H_V^{-1} \psi_0', \psi_0')_{L^2}$. 
	
	 Assume now that $(H_V^{-1} \psi_0', \psi_0')_{L^2}=0$. Notice that $\psi_1=H_V^{-1} \psi_0'\in \dom(H_V)$  and hence $\psi_1\in H^1(\R)$. Then we get after integration by parts
	\begin{equation*}
		0 = (H_V^{-1} \psi_0', \psi_0')_{L^2} = (\psi_1, \psi_0')_{L^2} = - (\psi_1', \psi_0)_{L^2}.
	\end{equation*}
	Applying the same procedure, we get that $\psi_2 := H_V^{-1} \psi_1'$ is well-defined and in $E_0(A)$. Thus, $\dim(E_0(A)) \geq 3$ in this case. 
	Finally, by Corollary \ref{cor:3.3}, $k(0)\ge 2$ since $\nu(0)\ge 3$. It remains to notice that $\deg(p)\le 3$ and hence $p$ does not have non-real zeros as well as zeros on $\R\setminus\{0\}$ since $p(z) = zp_0(z)(p_0(z^\ast))^\ast$ with $\deg(p_0)\le 1$. 
\end{proof}

\appendix

\section{Quadratic forms in Hilbert spaces}\label{app:01}

Let $A$ be a self-adjoint lower semibounded operator in $\gH$,
$A=A^*\ge -c $. Denote by $\gt'_A$ the (densely defined)
quadratic form given by 
\[
\gt'_A[f]=(A f,f),\qquad \dom(\gt'_A)=\dom(A).
\]
 It is known (see
\cite{kato}) that this form is closable and lower
semibounded, $\gt'_A\ge -c$. Its closure $\gt_A$ satisfies
$\gt_A\ge -c$. Moreover, by the second
representation theorem \cite[Theorem 6.2.23]{kato}, $\gt_A$
admits the representation
\begin{equation}\label{repr_1}
\gt_A[u]= \|(A+c)^{1/2}u\|^2_\gH -c\|u\|^2_\gH, \qquad
u\in\dom(\gt_A) = \dom\bigl((A+c)^{1/2}\bigr).
\end{equation}
Denote by $\gH_A$ the form domain $\dom(\gt_A)$ equipped  with the
norm
\begin{equation}\label{1.1}
\|u\|_A:=\gt_A[u]+(1+c)\|u\|^2_\gH,\qquad u\in\dom(\gt_A).
     \end{equation}

\begin{definition}\label{def2.2}
The form $\gt$ is called \emph{relatively form bounded} with respect to
$\gt_A$ ($\gt_A$-bounded) if $\dom(\gt_A)\subseteq \dom(\gt)$ and
there are constants $a,b>0$ such that
\begin{equation}\label{II.4}
|\gt[f]|\leq a\gt_A[f]+b\|f\|^2_\gH,\qquad f\in\dom(\gt_A).
\end{equation}
If \eqref{II.4} holds with some $a<1$, then $\gt$ is called {\em strongly $\gt_A$-bounded}. If $a$ can be chosen arbitrary small, then $\gt$ is called \emph{infinitesimally $\gt_A$-bounded}.
\end{definition}

\begin{theorem}[KLMN]\label{th_KLMN}
Let $\gt_A$ be the form corresponding to the operator $A=A^*\ge -c $ in $\gH$. If the form $\gt$ is strongly $\gt_A$-bounded, then the form
\begin{equation}\label{II.5}
{\gt}_1 := {\gt}_A+{\gt}, \qquad   \dom({\gt}_1):=\dom({\gt}_A),
\end{equation}
is closed and lower semibounded in $\gH$ and hence gives rise to a self-adjoint semibounded operator. 
   \end{theorem}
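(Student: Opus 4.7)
The plan is to derive lower semiboundedness and closedness of $\gt_1$ directly from the strong form-bound, and then to read off the self-adjoint operator from the first representation theorem. After replacing $A$ by $A + c\,I$ (which shifts both $\gt_A$ and $\gt_1$ by $c\|\cdot\|_\gH^2$) I may assume without loss of generality that $\gt_A \ge 0$; the hypothesis then reads
\[
|\gt[f]| \le a\,\gt_A[f] + b\,\|f\|_\gH^2, \qquad f\in\dom(\gt_A),
\]
with some $a\in[0,1)$ and $b>0$, and the strictness $a<1$ will be decisive at two different places.

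For lower semiboundedness I would combine this with $\gt_1[f]\ge \gt_A[f]-|\gt[f]|$ to obtain
\[
\gt_1[f] \ge (1-a)\,\gt_A[f] - b\,\|f\|_\gH^2 \ge -b\,\|f\|_\gH^2,
\]
since $\gt_A[f]\ge 0$. Hence $\gt_1\ge -b$ on $\dom(\gt_1) = \dom(\gt_A)$; the form is also densely defined and symmetric because both $\gt_A$ and $\gt$ are.

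For closedness I would prove that the natural form norm of $\gt_1$ is equivalent on $\dom(\gt_A)$ to the norm $\|f\|_A^2 := \gt_A[f] + \|f\|_\gH^2$ associated with $\gt_A$. Setting $\|f\|_{\gt_1}^2 := \gt_1[f] + (b+1)\|f\|_\gH^2$, the upper bound $\gt_1[f] \le (1+a)\gt_A[f] + b\,\|f\|_\gH^2$ (an immediate consequence of the form bound) together with the lower bound from the previous step yields constants $0 < c_1\le c_2 < \infty$ depending only on $a$ and $b$ such that
\[
c_1\,\|f\|_A^2 \le \|f\|_{\gt_1}^2 \le c_2\,\|f\|_A^2
\]
for every $f\in\dom(\gt_A)$. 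The form $\gt_A$ is closed by construction, so $(\dom(\gt_A),\|\cdot\|_A)$ is complete; the norm equivalence then transfers completeness to $(\dom(\gt_1),\|\cdot\|_{\gt_1})$, which is precisely the closedness of $\gt_1$.

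Finally, applying the first representation theorem \cite[Theorem VI.2.1]{kato} to the closed, densely defined, symmetric and lower semibounded form $\gt_1$ produces the unique semibounded self-adjoint operator in $\gH$ representing $\gt_1$. The main technical point is the norm-equivalence step, and it is precisely there that the strictness $a<1$ is indispensable: with $a=1$ the coefficient $(1-a)$ in front of $\gt_A[f]$ in the lower estimate collapses, no two-sided comparison of the two form norms can be extracted, and $\gt_1$ may genuinely fail to be closed.
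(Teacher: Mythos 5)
This is the classical KLMN theorem, which the paper itself does not prove: it is stated in the appendix as standard background (cf.\ Kato's book), so there is no in-paper argument to compare against. Your proof is the standard one and is correct — the shift to $\gt_A\ge 0$, the two-sided estimate $(1-a)\|f\|_A^2\le \gt_1[f]+(b+1)\|f\|_\gH^2\le c_2\|f\|_A^2$ obtained from the strong bound with $a<1$, the transfer of completeness between equivalent form norms, and the appeal to the first representation theorem are exactly the textbook route, and you correctly identify $a<1$ as the indispensable hypothesis. The only point worth making explicit is that $\gt$ must be assumed symmetric (real on the diagonal) for $\gt_1$ to be a symmetric form; this is implicit in the paper's setting but is used when you invoke the representation theorem.
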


Recall that a quadratic form $\gt$ in $\gH$ is called {\em compact} if it is bounded, $\gt= \gt_C,$
and the (bounded) operator $C$ is compact in $\gH$.

We also need the following result of M.\ Sh.\ Birman (see \cite[Theorem 1.2]{birman}).

  \begin{theorem}[Birman]\label{th2.2}
Let $A=A^*\ge -c $ in $\gH$  and let $\gt_A$
 be the corresponding form. 
If the quadratic form $\gt$ in ${\gH}$  is compact in $\gH_A$ (or
simply, $\gt_A$-compact), then the form $\gt_1$ defined by
\eqref{II.5} is closed, lower semibounded in $\gH$, and the operator $B=B^*$
associated with the form ${\gt}_1$  satisfies
$\sigma_{\ess}(B)=\sigma_{\ess}(A)$.
    \end{theorem}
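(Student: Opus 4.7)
The plan is to follow the classical three-step route: first promote $\gt_A$-compactness of $\gt$ to infinitesimal $\gt_A$-boundedness so that the KLMN theorem applies and produces the operator $B$; then establish a resolvent identity for $A$ and $B$ that exhibits their resolvent difference as a compact operator on $\gH$; finally invoke Weyl's theorem on compact perturbations of self-adjoint operators.

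For the first step I would argue by contradiction. Suppose there is $\eps_0>0$ and a sequence $\{f_n\}\subset\gH_A$ with $\|f_n\|_A=1$ violating $|\gt[f_n]|\le\eps_0\gt_A[f_n]+n\|f_n\|^2_\gH$. Boundedness of $\gt$ on $\gH_A$ (inherited from compactness) forces $\|f_n\|_\gH\to 0$. After extraction, $f_n\rightharpoonup f$ weakly in $\gH_A$; the continuous embedding $\gH_A\hookrightarrow\gH$ transports weak convergence to $\gH$, so $f=0$. Then $\gt_A$-compactness of $\gt$ forces $\gt[f_n]\to 0$, while $\gt_A[f_n]=1-(1+c)\|f_n\|^2_\gH\to 1$, so the right-hand side of the violated inequality stays bounded below by $\eps_0/2$, a contradiction. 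This gives infinitesimal $\gt_A$-boundedness, and Theorem \ref{th_KLMN} then produces the closed, lower-semibounded form $\gt_1$ together with its self-adjoint operator $B$.

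For the essential-spectrum step, I would fix $\mu>c$ large enough that $|\gt[f]|\le\eps\|f\|^2_{A,\mu}$ for some $\eps<1$, using the equivalent form-domain norm $\|f\|^2_{A,\mu}:=\gt_A[f]+\mu\|f\|^2_\gH=\|(A+\mu)^{1/2}f\|^2_\gH$. Riesz representation on the Hilbert space $(\gH_A,\|\cdot\|_{A,\mu})$ gives a self-adjoint operator $C$ on $\gH_A$ with $\gt[f,g]=(Cf,g)_{A,\mu}$; since $\gt$ is a compact form, $C$ is a compact operator, and the above bound gives $\|C\|\le\eps<1$, so $I+C$ is boundedly invertible. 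For $u\in\gH$, comparing the form identities defining $w:=(B+\mu)^{-1}u$ and $(A+\mu)^{-1}u$ yields $((I+C)w,\phi)_{A,\mu}=(u,\phi)_\gH=((A+\mu)^{-1}u,\phi)_{A,\mu}$ for every $\phi\in\gH_A$, hence $(I+C)(B+\mu)^{-1}=(A+\mu)^{-1}$ and so
\[
(A+\mu)^{-1}-(B+\mu)^{-1}=C(B+\mu)^{-1}
\]
as maps $\gH\to\gH_A$. The operator $(B+\mu)^{-1}\colon\gH\to\gH_A$ is bounded (either invert $I+C$ in the identity above, or absorb $\gt$ directly via $(1-\eps)\|w\|^2_{A,\mu}\le\|u\|_\gH\|w\|_\gH$), $C\colon\gH_A\to\gH_A$ is compact, and $\gH_A\hookrightarrow\gH$ is continuous; composing these three maps shows that the right-hand side is compact as an operator on $\gH$. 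Weyl's theorem on compact perturbations of self-adjoint operators then yields $\sigma_\ess(A)=\sigma_\ess(B)$.

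The main obstacle is the last step: since $\dom(B)$ is not directly accessible from $\gt_A$-compactness, the manipulations must be carried out purely at the form level. The trick is to work on the Hilbert space $(\gH_A,\|\cdot\|_{A,\mu})$ with $\mu$ chosen so large that the Riesz representative $C$ of $\gt$ is a strict contraction, which both makes $I+C$ invertible and permits the form-theoretic derivation of the resolvent identity above. The remaining verifications are routine checks about equivalence of the norms $\|\cdot\|_A$ and $\|\cdot\|_{A,\mu}$ and continuity of the embedding $\gH_A\hookrightarrow\gH$.
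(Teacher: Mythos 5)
Your argument is correct and is the standard proof of this result (infinitesimal form-boundedness from form-compactness, KLMN, the form-level resolvent identity $(A+\mu)^{-1}-(B+\mu)^{-1}=C(B+\mu)^{-1}$, then Weyl); the paper itself gives no proof, citing Birman's Theorem~1.2, and Birman's original argument proceeds along essentially the same lines. The only step you gloss over is that $-\mu\in\rho(B)$, but this follows at once from the coercivity estimate $\gt_1[f]+\mu\|f\|^2_\gH\ge(1-\eps)\|f\|^2_{A,\mu}\ge(1-\eps)(\mu-c)\|f\|^2_\gH$ that you already use, so nothing is missing.
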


Notice that the form $\gt$ is  infinitesimally $\gt_A$-bounded if it
is $\gt_A$-compact.      
 We also need the following useful fact.

   \begin{lemma}\label{lem3.1}
Let $A=A^*\ge -cI$ and let $\gt$ be a nonnegative quadratic form in $\gH$ such that $\gH_A \subset
\dom(\gt)$ and $\gt$ is closable  in $\gH_A$. Then the form $\gt$
is compact in $\gH_A$ if and only if the embedding $i\colon
\gH_A\to\dom(\gt)$ is compact.
  \end{lemma}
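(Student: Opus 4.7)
The plan is to reduce the statement to the standard correspondence between nonnegative bounded symmetric forms on a Hilbert space and the bounded self-adjoint operators representing them, and then check that compactness of the representing operator matches compactness of the embedding. The closability of $\gt$ in $\gH_A$ is used only to interpret $\dom(\gt)$ as a genuine Hilbert space --- namely the completion of $\gH_A/\ker(\gt)$ with respect to the form (semi)norm induced by $\gt$ --- so that $i\colon\gH_A\to\dom(\gt)$ is an operator between Hilbert spaces.

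First I would show that each of the two conditions forces $\gt$ to be bounded as a form on $\gH_A$. If $\gt$ is compact in $\gH_A$ this is automatic, since by definition $\gt=\gt_C$ with $C$ bounded on $\gH_A$. If instead $i\colon\gH_A\to\dom(\gt)$ is compact, it is in particular continuous, and since the norm on $\dom(\gt)$ controls $\sqrt{\gt[\cdot]}$ this gives an estimate $\sqrt{\gt[u]}\le M\|u\|_A$, so again $\gt$ is bounded on $\gH_A$. Under either assumption, polarisation and the Riesz representation theorem in $\gH_A$ produce a unique bounded nonnegative self-adjoint operator $C$ on $\gH_A$ with
\[
\gt(u,v)=(Cu,v)_A,\qquad u,v\in\gH_A.
\]
The lemma is then reduced to the equivalence: $C$ is compact on $\gH_A$ iff $i$ is compact.

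For ``$\gt$ compact $\Rightarrow$ $i$ compact'', let $\{u_n\}$ be bounded in $\gH_A$. Compactness of $C$ yields a subsequence along which $\{Cu_{n_k}\}$ converges in $\gH_A$, and
\[
\gt[u_{n_k}-u_{n_l}]=(C(u_{n_k}-u_{n_l}),u_{n_k}-u_{n_l})_A\le \|C(u_{n_k}-u_{n_l})\|_A\cdot 2\sup_n\|u_n\|_A\to 0,
\]
so $\{u_{n_k}\}$ is Cauchy in the form seminorm $\sqrt{\gt[\cdot]}$ and hence convergent in $\dom(\gt)$. For the converse, let $\{u_n\}$ be bounded in $\gH_A$; compactness of $i$ yields a subsequence Cauchy in $\dom(\gt)$, and the Cauchy--Schwarz inequality for the nonnegative symmetric form $\gt$ gives
\[
\|C(u_{n_k}-u_{n_l})\|_A=\sup_{\|v\|_A\le 1}|\gt(u_{n_k}-u_{n_l},v)|\le \sqrt{\gt[u_{n_k}-u_{n_l}]}\cdot \sup_{\|v\|_A\le 1}\sqrt{\gt[v]}\to 0,
\]
the last supremum being finite by boundedness of $\gt$. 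Hence $\{Cu_{n_k}\}$ converges in $\gH_A$ and $C$ is compact.

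The only genuine subtlety --- and what I would flag as the main obstacle --- is fixing the convention for the norm on $\dom(\gt)$. With the pure form seminorm $\sqrt{\gt[\cdot]}$ one obtains the cleanest version of the equivalence but must first quotient out $\ker(\gt)$ and complete; with the graph norm $\sqrt{\gt[\cdot]+\|\cdot\|_\gH^2}$ some additional compactness of $\gH_A\hookrightarrow\gH$ could creep in and needs to be controlled. The closability hypothesis is precisely what allows either convention to be used consistently, ensuring that the Cauchy sequences produced above actually land in $\dom(\gt)$. Once this bookkeeping is settled, both directions collapse to a single Cauchy--Schwarz estimate for the form $\gt$.
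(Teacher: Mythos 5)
Your argument is correct; note that the paper states Lemma \ref{lem3.1} without proof, as a known auxiliary fact accompanying Birman's theorem, so there is no in-paper argument to compare with. What you wrote is the standard proof: observe that either hypothesis forces $\gt$ to be bounded on $\gH_A$, represent it as $\gt(u,v)=(Cu,v)_A$ with $C=C^*\ge 0$ bounded on $\gH_A$, and then the two Cauchy--Schwarz estimates show that compactness of $C$ and compactness of $i$ are equivalent (equivalently, $\|i(u)\|=\|C^{1/2}u\|_A$, so $i$ is compact iff $C^{1/2}$ is, iff $C$ is). You also correctly isolated the one genuinely delicate point: the target $\dom(\gt)$ must carry the pure form (semi)norm $\sqrt{\gt[\,\cdot\,]}$, after factoring out the kernel and completing; this is the reading consistent with the paper's application, where the embedding in question is $H^1(\R)\hookrightarrow L^2(\R;|V|)$. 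With the graph norm $\sqrt{\gt[\,\cdot\,]+\|\cdot\|_{\gH}^2}$ the equivalence would in addition require compactness of $\gH_A\hookrightarrow\gH$ and fails in general (e.g.\ for $\gt\equiv 0$). The only cosmetic slip is the phrase ``convergent in $\dom(\gt)$'': the completion may be strictly larger than $\dom(\gt)$, but since compactness of $i$ only asks that images of bounded sets be totally bounded, producing Cauchy subsequences in the form seminorm is exactly what is needed, and for the implication actually used in the paper (embedding compact $\Rightarrow$ form compact) no completeness of the target enters at all.
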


\medskip

\end{document}